\newcommand{\ac}{{\mathcal{A}}}
\newcommand{\Pp}{{\mathcal{P}}}
\newcommand{\Rck}{{\mathcal{R}}^{(k)}}
\newcommand{\Rr}{{\mathcal{R}}}
\newcommand{\Var} {\mathrm {Var}}
\newcommand{\Prob} {\mathbb {P}}
\newcommand{\prob}[1]{\Prob\left(#1\right)}
\newcommand{\pc}[2]{\prob{#1\,|\,#2}}
\newcommand{\Ec}{\mathcal{E}}
\newcommand{\eps} {\varepsilon}
\newcommand{\E}{{\mathbb{E}}}
\newcommand{\G} {\mathbb G}
\newcommand{\M} {\mathbb M}
\newcommand{\R} {\mathbb R}
\newcommand{\Bi}{\operatorname{Bin}}
\newcommand{\x}{\mathbf{x}}
\newcommand{\y}{\mathbf{y}}
\newcommand{\Gk}{\G^{(k)}}
\newcommand{\Rk}{\R^{(k)}}
\newcommand{\Hn}[1]{\Gk(n,#1)}
\newcommand{\Hnm}{\Hn{m}}
\newcommand{\Hnp}{\Hn{p}}
\newcommand{\hcnd}{\Rk(n,d)}
\newcommand{\cod}{\operatorname{cod}}
\newtheorem{theorem}{Theorem}
\newtheorem{lemma}[theorem]{Lemma}
\newtheorem{corollary}[theorem]{Corollary}
\newtheorem{claim}[theorem]{Claim}
\newtheorem{conjecture}{Conjecture}
\newtheorem*{remark*}{Remark}
\begin{document}
\title{Embedding the Erd\H{o}s-R\'enyi Hypergraph into the Random Regular Hypergraph and Hamiltonicity}
\date{September 23, 2016}

\author{
\normalsize ANDRZEJ DUDEK{$^1$}\thanks{Project sponsored by the National Security Agency under Grant Number H98230-15-1-0172. The United States Government is authorized to reproduce and distribute reprints notwithstanding any copyright notation hereon.} \footnotemark[5] 
\and \normalsize ALAN FRIEZE{$^{2}$}\thanks{Supported in part by NSF grant {CCF 1013110}.} 
\and \normalsize ANDRZEJ RUCI\'NSKI{$^{3}$}\thanks{Supported in part by the Polish NSC grant 2014/15/B/ST1/01688 and NSF grant DMS 1102086.} \footnotemark[5] \and 
\normalsize MATAS \v{S}ILEIKIS{$^{4}$}\thanks{Part of research performed at Uppsala University (Sweden) and the University of Oxford (United Kingdom).} \thanks{Part of research performed during a visit to the Institut Mittag-Leffler (Djursholm, Sweden).} \\
\small {$^1$}Department of Mathematics, Western Michigan University, Kalamazoo, MI \\
\small \texttt{andrzej.dudek@wmich.edu} \\
\small {$^2$}Department of Mathematical Sciences, Carnegie Mellon University, Pittsburgh, PA\\
\small \texttt{alan@random.math.cmu.edu} \\
\small {$^3$}Department of Discrete Mathematics, Adam Mickiewicz University, Pozna\'n, Poland\\
\small \texttt{rucinski@amu.edu.pl} \\
\small {$^4$}Department of Applied Mathematics, Charles University, Prague, Czech Republic\\
\small \texttt{matas.sileikis@gmail.com}
}

\maketitle

\begin{abstract}
We establish an inclusion relation between two uniform models of random $k$-graphs (for constant $k
\ge 2$) on $n$ labeled vertices: $\Gk(n,m)$, the random $k$-graph with $m$ edges, and $\Rk(n,d)$,
the random $d$-regular $k$-graph. We show that if $n\log n\ll m\ll n^k$ we can choose $d = d(n) \sim {km}/n$ and couple
$\Gk(n,m)$ and $\Rk(n,d)$  so that the latter contains the former  with
probability tending to one as $n\to\infty$. This extends an earlier result of Kim and Vu
about ``sandwiching random graphs''. In view of known threshold theorems on the existence of
different types of Hamilton cycles in $\Gk(n,m)$, our result allows us to find conditions under
which $\Rk(n,d)$ is Hamiltonian. In particular, for $k\ge 3$ we conclude that  if $n^{k-2} \ll d
\ll n^{k-1}$, then a.a.s. $\Rk(n,d)$ contains a tight Hamilton cycle.

\end{abstract}

\section{Introduction}
\subsection{Background}
A \emph{$k$-uniform hypergraph} (or \emph{$k$-graph} for short)  on a vertex set $V = [n] = \{1, \dots,
n\}$ is an ordered pair $G = (V,E)$ where $E$ is a family of $k$-element subsets of $V$. The \emph{degree} of
a vertex $v$ in $G$ is defined as
  \begin{equation*}
    \deg_G(v) := |\left\{ e \in E : v \in e \right\}|.
  \end{equation*}
A $k$-graph  is \emph{$d$-regular} if the degree of every vertex is $d$.
  Let $\Rck(n,d)$ be the family of all $d$-regular $k$-graphs on $V$.
  Throughout, we tacitly assume that $k$ divides $nd$.
  By $\Rk(n,d)$ we denote the \emph{$d$-regular random $k$-graph} which
 is chosen uniformly at random from $\Rck(n,d)$.

Let us recall two more standard models of random $k$-graphs on $n$ vertices. For $p \in [0,1]$, the
\emph{binomial random $k$-graph} $\Gk(n,p)$ is obtained by including each of the
$\binom n k$ possible edges with probability $p$, independently of others. Further, for an integer
$m \in [0,\binom n k]$, the \emph{uniform random $k$-graph} $\Gk(n,m)$ is chosen uniformly at random
among all $\binom{\binom nk}m$ $k$-graphs on $V$ with precisely $m$ edges.

We study the behavior of these random $k$-graphs as $n \to \infty$. Parameters $d, m, p$ are
treated as functions of $n$ and typically tend to infinity in case of $d$, $m$, or zero, in case of $p$. Given a
sequence of events $(\ac_n)$, we say that $\ac_n$ holds \emph{asymptotically almost surely}
(\emph{a.a.s.}) if $\prob{\ac_n} \to 1$, as $n \to \infty$. Also, we write $a_n \ll b_n$ and $b_n \gg a_n$ for $a_n = o(b_n)$.

In 2004, Kim and Vu \cite{KV04} proved that if $\log n \ll d \ll n^{1/3}/\log^2 n$ then there exists a coupling (that is, a joint distribution) of the random graphs
$\G^{(2)}(n,p)$ and $\R^{(2)}(n,d)$ with $p=\frac{d}{n}\left(1-O\left((\log n/d)^{1/3}\right)\right)$ such that
\begin{equation}\label{eq:graphembed}
\G^{(2)}(n,p)\subset \R^{(2)}(n,d) \quad \text{a.a.s}.
\end{equation}
They pointed out several consequences of this result, emphasizing the ease with which one can carry
over known  properties of $\G^{(2)}(n,p)$ to the harder to study regular model $\R^{(2)}(n,d)$. Kim
and Vu  conjectured that such a coupling is possible for all $d\gg\log n$ (they also conjectured a
reverse embedding which is not of our interest here). In \cite{DFRS} we considered a
(slightly weaker) extension of Kim and Vu's result to $k$-graphs, $k \ge 3$, and proved that
\begin{equation}\label{eq:hypembed}
  \G^{(k)}(n,m) \subset \R^{(k)}(n,d) \quad\text{a.a.s.}
\end{equation}
whenever $C \log n \le d \ll n^{1/2}$ and $m \sim cnd$ for some absolute large constant $C$ and a
sufficiently small constant $c = c(k)
> 0$.
Although \eqref{eq:hypembed} is stated for the uniform $k$-graph $\Gk(n,m)$, it is easy to see that
one can replace $\Gk(n,m)$ by $\Gk(n,p)$ with $p = m/\binom n k$ (see 
Section \ref{sec:concluding}).

\subsection{The Main Result}
In this paper we extend \eqref{eq:hypembed} to larger degrees, assuming only $d \le c n^{k-1}$ for
some constant $c = c(k)$. Moreover, our result implies that, provided $\log n \ll d \ll n^{k-1}$,
we can take $m \sim nd/k$, that is, the embedded $k$-graph contains almost all edges of the regular
$k$-graph rather than just a positive fraction, as in \cite{DFRS}. The new result is also
valid for $k = 2$ (for the proof of this case alone, see also \cite[Section 10.3]{FK}), and thus extends~\eqref{eq:graphembed}.

\begin{theorem}\label{thm:embed}
    For each $k \ge 2$ there is a positive constant $C$ such that if for some real
    $\gamma=\gamma(n)$ and positive integer $d=d(n)$,
\begin{equation}\label{eq:gamma}
C \left( \left(d/n^{k-1} + (\log n)/d\right)^{1/3} +1/n\right) \le \gamma < 1,
\end{equation}
and $m = (1-\gamma) nd/k$ is an integer, then there is a joint distribution of $\Gk(n,m)$ and
$\Rk(n,d)$ with
    \begin{equation*}
        \lim_{n\to\infty}\prob{\Gk(n,m) \subset \Rk(n,d)} = 1.
    \end{equation*}
\end{theorem}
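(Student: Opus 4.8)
The plan is to reduce the coupling statement to a total variation estimate and then establish that estimate by a switching argument. By the standard coupling lemma, to produce a joint distribution with $\Gk(n,m) \subset \Rk(n,d)$ a.a.s.\ it suffices to construct a single random $m$-edge $k$-graph $G$ that is \emph{automatically} a subgraph of a copy of $\Rk(n,d)$ and satisfies $\tvd(G, \Gk(n,m)) = o(1)$. To this end I would sample $R \sim \Rk(n,d)$ and delete a uniformly random set of $r := nd/k - m = \gamma nd/k$ of its $nd/k$ edges, colouring the deleted edges $\red$ and letting $G$ consist of the remaining $\green$ edges; then $G \subset R$ holds by construction, and it remains only to compare the law of $G$ with that of $\Gk(n,m)$.

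For a fixed $m$-edge $k$-graph $H$ one has
\begin{equation*}
\prob{G = H} = \frac{N(H)}{|\Rck(n,d)|\binom{nd/k}{r}}, \qquad \prob{\Gk(n,m) = H} = \frac{1}{\binom{\binom nk}{m}},
\end{equation*}
where $N(H)$ denotes the number of members of $\Rck(n,d)$ containing $H$. The right-hand probability is constant over all $H$, so the whole problem is to show that $N(H)$ is essentially constant over the $H$ that carry almost all of the mass. First I would single out a set $\nice$ of \emph{nice} $m$-edge $k$-graphs: those with maximum degree at most $d$ whose degree sequence lies within $O\bigl(d^{2/3}(\log n)^{1/3}\bigr)$ of the common mean $(1-\gamma)d$ (equivalently, whose $\red$-deficiencies $d - \degb(v)$ all lie close to $\gamma d$). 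A routine Chernoff/hypergeometric tail bound together with a union bound over the $n$ vertices shows that both $\Gk(n,m)$ and $G$ land in $\nice$ with probability $1 - o(1)$; the window width is calibrated so that, under the hypothesis $\gamma \ge C(\log n/d)^{1/3}$, the relative fluctuation of the deficiencies is $o(1)$.

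The core of the argument is the switching estimate. Fixing the green graph $H$, the quantity $N(H)$ equals the number of $k$-graphs $F$, edge-disjoint from $H$, whose degree sequence is the deficiency $\dd^H = (d - \degb(v))_v$; these are the admissible $\red$ completions. I would estimate $N(H)$ by switchings $\sw$ that swap a bounded number of red edges while preserving the degree sequence, first showing that forbidding the (sparse) set of $\green$ edges costs only a factor $1 - o(1)$, and then showing that the resulting count is insensitive, up to $1 + o(1)$, to moving the deficiency sequence anywhere within the nice window. Comparing forward and backward switchings and bounding the defect terms gives $N(H) = (1+o(1))N_0$ with $N_0$ independent of the particular nice $H$. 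The density hypothesis $\gamma^3 \ge C d/n^{k-1}$ guarantees that a freshly created red edge coincides with an existing green or red edge only with probability $o(1)$, so the avoidance and the switchings almost never fail, while $\gamma^3 \ge C(\log n)/d$ guarantees that every vertex carries red degree $\Theta(\gamma d)$ large enough for the switchings to have the required multiplicity. These are precisely the two terms in \eqref{eq:gamma}, and the cube-root exponent is the cost of optimising the switching error, exactly as in Kim--Vu.

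Combining the three ingredients, on $\nice$ the two laws agree up to a factor $1 + o(1)$ and off $\nice$ both assign total mass $o(1)$, whence $\tvd(G, \Gk(n,m)) = o(1)$ and the coupling lemma finishes the proof. I expect the switching step to be the main obstacle: controlling the codegree-type defect terms that measure how often an attempted switch creates a repeated edge or pushes a vertex outside the degree window, uniformly over all nice $H$, is where the full strength of \eqref{eq:gamma} is consumed.
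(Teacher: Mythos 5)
Your reduction to a total-variation estimate is sound in the abstract, but the specific random $k$-graph $G$ you propose --- delete a uniformly random $r$-subset of the edges of $\Rk(n,d)$ --- is provably \emph{not} close to $\Gk(n,m)$ in total variation, so the argument collapses at its first step. Compare the degree of one fixed vertex $v$ under the two laws. Under $\Gk(n,m)$, $\deg(v)$ is hypergeometric with mean $km/n=(1-\gamma)d$ and variance $\sim(1-\gamma)d$, since $m/\binom nk=O(d/n^{k-1})=o(1)$. Under your $G$, $\deg(v)=d-Y_v$ where, conditionally on any $d$-regular $R$, $Y_v$ is hypergeometric with mean $\gamma d$ and variance $\sim\gamma(1-\gamma)d$. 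In the regime $\gamma=o(1)$ --- exactly the regime the theorem is designed to reach, since it is what makes $m\sim nd/k$ --- the two degree distributions share a mean but have standard deviations $\sqrt{(1-\gamma)d}$ versus $\sqrt{\gamma(1-\gamma)d}$, whose ratio tends to infinity; a local limit argument then gives $\tvd\bigl(\deg_G(v),\deg_{\Gk(n,m)}(v)\bigr)\to 1$, and a fortiori $\tvd(G,\Gk(n,m))\to 1$. The same phenomenon falsifies your key claim that $N(H)=(1+o(1))N_0$ on the nice set: writing $r_v=d-\deg_H(v)$, the count $N(H)$ carries a factor of order $1/\prod_v r_v!$ (already visible in the configuration model), and this factor varies by a quantity exponentially large in $n$ as the deficiency sequence ranges over your window of width $\Theta(d^{2/3}(\log n)^{1/3})$ --- a window that does contain the typical degree sequences of both models, but on which the two laws place very differently shaped mass. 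It is precisely this non-constancy of $N(H)$ that over-concentrates the degree sequence of $G$; no switching analysis can remove it, because the discrepancy is an honest distributional fact rather than an error term.

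The paper sidesteps this by never demanding an exchangeable selection of $m$ edges from $\Rk(n,d)$. It exposes the edges of $\Rk(n,d)$ in uniformly random order and proves (Lemma \ref{lem:Tlives}) that up to time $(1-\epsilon)M$ the conditional law of the next edge dominates $(1-\epsilon)$ times the uniform law on the complement; the coupling then interleaves the Erd\H{o}s--R\'enyi process with this regular process step by step, sacrificing an $\epsilon$-fraction of the steps --- this, together with a binomial concentration bound, is where the loss $m=(1-\gamma)nd/k$ is paid, rather than in a global TV bound. Crucially, the only enumeration it needs is the ratio $|\Rr_{G\cup e}(n,d)|/|\Rr_{G\cup f}(n,d)|$ for two extensions differing in a single edge, which switchings and the configuration model can handle; your plan instead requires comparing $N(H_1)$ with $N(H_2)$ for nice graphs with genuinely different degree sequences, which is tantamount to the asymptotic enumeration problem the authors explicitly set out to avoid (and which was open for $d\gg n^{1/2}$ when the paper was written). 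To salvage a one-shot construction you would have to bias the deleted edge set so that the resulting deficiency sequence reproduces the wider degree fluctuations of $\Gk(n,m)$ --- in effect conditioning on a random degree sequence first --- and carrying that out correctly is essentially equivalent in difficulty to the sequential coupling.
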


\begin{remark*}
In the assumption (\ref{eq:gamma}) of Theorem~\ref{thm:embed} the term $1/n$ can be omited when $k\le 7$. Indeed,
the inequality of arithmetic and geometric means implies that
$$(d/n^{k-1} + (\log n)/d)^{1/3} \ge
(2/n^{(k-1)/2})^{1/3}\ge 1/n.$$
\end{remark*}

 For a given $k\ge2$, \emph{a $k$-graph property} is a family of $k$-graphs closed under isomorphisms. A
  $k$-graph property
  $\Pp$ is called \emph{monotone increasing}
  if it is preserved by adding edges (but not necessarily by adding vertices, as the example of, say, perfect matching shows).
\begin{corollary}\label{cor:properties}
Let $\Pp$ be a monotone increasing property of $k$-graphs and $\log n\ll d\ll n^{k-1}$. If
for some $m\le (1-\gamma) nd/k$, where $\gamma$ satisfies (\ref{eq:gamma}),  $\Gk(n,m) \in \Pp$ a.a.s., then $\Rk(n,d) \in \Pp$ a.a.s.
\end{corollary}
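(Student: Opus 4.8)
The plan is to derive Corollary~\ref{cor:properties} directly from Theorem~\ref{thm:embed} by a short coupling argument; the only mild technical point is arranging the number of edges to have the exact algebraic form required by the theorem. First I would reduce to the case where the embedding theorem applies verbatim. Set $m^\ast$ to be the largest integer with $m^\ast \le (1-\gamma)nd/k$; since $m$ is an integer with $m \le (1-\gamma)nd/k$, we have $m \le m^\ast$. Writing $\gamma' := 1 - km^\ast/(nd)$, one checks that $(1-\gamma')nd/k = m^\ast$ is an integer, and that $\gamma \le \gamma' < 1$ (the upper bound because $m^\ast \ge 1$, which is the only interesting case: if $m = 0$ then $\Gk(n,m)$ is the empty $k$-graph, so $\Pp$ contains it and hence, by monotonicity, every $k$-graph, making $\Rk(n,d) \in \Pp$ trivial). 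Because $\gamma' \ge \gamma$, the parameter $\gamma'$ still satisfies the lower bound in~\eqref{eq:gamma}. Hence Theorem~\ref{thm:embed}, applied with $\gamma'$ and $m^\ast$ in place of $\gamma$ and $m$, yields a joint distribution of $\Gk(n,m^\ast)$ and $\Rk(n,d)$ under which $\Gk(n,m^\ast) \subseteq \Rk(n,d)$ a.a.s.

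Next I would prepend a monotone coupling between $\Gk(n,m)$ and $\Gk(n,m^\ast)$. Since $m \le m^\ast$, one can first generate $\Gk(n,m^\ast)$ and then delete a uniformly random set of $m^\ast - m$ of its edges; the resulting $k$-graph is distributed exactly as $\Gk(n,m)$ and is, by construction, a subgraph of $\Gk(n,m^\ast)$. Composing this with the coupling from the previous step produces a single joint distribution of $\Gk(n,m)$, $\Gk(n,m^\ast)$, and $\Rk(n,d)$ in which $\Gk(n,m) \subseteq \Gk(n,m^\ast) \subseteq \Rk(n,d)$ a.a.s., while the marginal law of $\Gk(n,m)$ is left unchanged.

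Finally I would combine the two a.a.s.\ statements. By hypothesis $\Gk(n,m) \in \Pp$ a.a.s., and the coupling preserves this since it does not alter the distribution of $\Gk(n,m)$; together with $\Gk(n,m) \subseteq \Rk(n,d)$ a.a.s., both events hold simultaneously a.a.s. On their intersection, monotonicity of $\Pp$ (closure under edge addition on a fixed vertex set) forces $\Rk(n,d) \in \Pp$, which is the desired conclusion. The substantive work is entirely contained in Theorem~\ref{thm:embed}; here the only thing requiring genuine care is the integrality adjustment producing a valid pair $(\gamma', m^\ast)$, and I expect no real obstacle beyond verifying that $\gamma'$ stays within the admissible range~\eqref{eq:gamma}.
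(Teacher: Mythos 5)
Your proposal is correct and follows exactly the route the paper intends: the paper gives no separate proof of Corollary~\ref{cor:properties}, treating it as an immediate consequence of Theorem~\ref{thm:embed} via the standard monotone coupling $\Gk(n,m)\subseteq\Gk(n,m^\ast)$ and monotonicity of $\Pp$. Your only added content, the integrality adjustment replacing $\gamma$ by $\gamma'\ge\gamma$ so that $(1-\gamma')nd/k$ is an integer while~\eqref{eq:gamma} is preserved, is exactly the right bookkeeping and is handled correctly.
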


\subsection{Comparison with the Proof by Kim and Vu}
Kim and Vu \cite{KV04} proved \eqref{eq:graphembed} by analysing a certain algorithm that generates a random graph $\R_A$, coupling it with $\R(n,d)$ so that $\R_A = \R(n,d)$ a.a.s., and then embedding $\G(n,p)$ into $\R_A$ a.s.s.

 The algorithm can be described concisely as sequentially generated configuration model which rejects a chosen edge (with replacement), if it violates the simplicity of the graph. Note that the algorithm may run out of admissible edges before it produces a $d$-regular graph. Refining analysis of Steger and Wormald \cite{SW}, Kim and Vu \cite{KV04} proved a coupling of $\R_A$ and $\R(n,d)$ for $d \ll n^{1/3}/\log^2 n$. It is worth mentioning that in another paper Kim and Vu \cite{KV06} proved, for $d = n^{1/3 - \eps}$ with arbitrary $\eps > 0$, a slightly stronger statement that $\R_A$ is asymptotically uniform, that is, 
\begin{equation}\label{eq:assuni}
  \prob{\R_A = G} = (1+o(1))|\Rr(n,d)|^{-1}
\end{equation}
uniformly over \emph{all} $G \in \Rr(n,d)$. The last section in \cite{KV06} reflects some beliefs that this result cannot be extended to $d$ larger than $n^{1/3}$. Although for the coupling of $\R_A$ and $\R(n,d)$ it is enough to prove weaker uniformity, when \eqref{eq:assuni} is allowed to fail for $o(|\Rr(n,d)|)$ graphs, an attempt to extend the approach of Kim and Vu did not seem to be very promising.

Another looming obstacle was the dependence of the proof of asymptotic uniformity in \cite{KV04} on an asymptotic formula for $|\Rr(n,d)|$ due to McKay and Wormald \cite{MW91}, which is valid for $d \ll n^{1/2}$. The problem of asymptotically enumerating $\Rr(n,d)$ had been open in the range $n^{1/2} \le d \ll n/\log n$ since 1991 (see \cite{MW91}).

In the present paper we avoid both explicit generation of random regular graphs and enumeration of regular graphs. Instead we embed $\G(n,m)$ directly into $\R(n,d)$. For this we show that if $\R(n,d)$ is revealed edge by edge (by first sampling the graph and then exposing its edges in a random order), then the conditional distribution of the next edge is nearly uniform over the complement of the current graph (unless we are close to the end). 

Still, getting a fair estimate for the conditional distribution of the next edge is as hard as enumerating graphs with a given degree sequence. We deal with this issue by instead estimating ratios of (conditional) probabilities. This allows us to replace asymptotic enumeration by \emph{relative} enumeration, by which we mean comparison of the number of ways to extend two graphs $G_1$, $G_2$ (differing just by two edges) to a $d$-regular graph.

In April 2016, well after the present paper was submitted, Wormald \cite{W16} announced a proof (as a joint result with Anita Liebenau) of asymptotic enumeration in the missing range of $d$. This makes it more likely that an approach relying on enumeration could lead to another proof of our result. However, we have not attempted this.

For the outline of our proof, see Subsection \ref{subsec:structure}.

\subsection{Hamilton Cycles in Hypergraphs}
To show a more specific application of Theorem~\ref{thm:embed}
 we consider Hamilton cycles in random regular hypergraphs.

For integers $1\le\ell< k$, define an {\em $\ell$-overlapping cycle} (or \emph{$\ell$-cycle}, for short) as a $k$-graph in
which, for some cyclic ordering of its vertices, every edge consists of $k$ consecutive vertices,
and every two consecutive edges (in the natural ordering of the edges induced by the ordering of
the vertices) share exactly $\ell$ vertices. (For $\ell>k/2$ it implies, of course, that some nonconsecutive edges intersect as well.)
 A $1$-cycle is called  \emph{loose} and a
$(k-1)$-cycle is called \emph{tight}. A spanning $\ell$-cycle in a $k$-graph $H$ is called an
\emph{$\ell$-Hamilton} cycle. Observe that a necessary condition for the existence of an
$\ell$-Hamilton cycle is that $n$ is divisible by $k-\ell$. We will assume this divisibility
condition whenever relevant.

Let us recall the results on Hamiltonicity of random regular graphs, that is, the case $k=2$. Asymptotically almost sure Hamiltonicity of
$\R^{(2)}(n,d)$ was proved by Robinson and Wormald \cite{RW94} for fixed $d \ge 3$, by
Krivelevich, Sudakov, Vu and Wormald \cite{KSVW}  for $d \ge n^{1/2} \log n$, and by Cooper,
Frieze and Reed \cite{CFR}  for $C \le d \le n/C$ and some large constant $C$.

Much less is known for random hypergraphs. For the binomial models, the thresholds were found only recently. First, results on loose Hamiltonicity of $\Gk(n,p)$ were
obtained by Frieze~\cite{F} (for $k=3$), Dudek and Frieze~\cite{DF} (for $k \ge 4$ and $2(k-1)|n$),
and   by Dudek, Frieze, Loh and Speiss~\cite{DFLS} (for $k \ge 3$ and $(k-1)|n$). As usual,  the
asymptotic equivalence of the models $\Hnp$ and $\Hnm$ (see, e.g., Corollary 1.16 in \cite{JLR})
allows us to reformulate the aforementioned results for the random $k$-graph $\Gk(n,m)$.

\begin{theorem}[\cite{F,DF,DFLS}]\label{thm:gnm_loose}
There is a constant $C > 0$ such that if $m \ge Cn\log n$, then a.a.s. $\G^{(3)}(n,m)$ contains a
loose Hamilton cycle. Furthermore, for every $k \ge 4$ if $m\gg n\log n$, then a.a.s. $\Gk(n,m)$
contains a loose Hamilton cycle.
\end{theorem}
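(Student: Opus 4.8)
The plan is to pass to the binomial model $\Gk(n,p)$ with $p = m/\binom nk$, which is legitimate by the asymptotic equivalence of $\Hnm$ and $\Hnp$ recalled above; under the hypotheses this gives $p$ of order at least $(\log n)/n^{k-1}$. I would then establish the loose Hamilton cycle by the absorption method, which breaks the task into a few largely independent pieces. First I would fix a random reservoir $R \subseteq [n]$ of size $\Theta(n)$ (each vertex included independently with a small constant probability) and prove a connecting lemma: for almost every pair of vertices $u,w$, and even after a bounded number of earlier deletions, $\Gk(n,p)$ contains a short loose path from $u$ to $w$ whose interior vertices lie in $R$. This follows from a first- and second-moment estimate showing that the number of such short paths is large and concentrated, since $p \gg (\log n)/n^{k-1}$ forces the relevant joint-degree statistics to behave as expected.

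Second, using the connecting lemma I would build an absorbing loose path $P_A$ on $o(n)$ vertices with the property that for every sufficiently small subset $S$ of the remaining reservoir there is a loose path on $V(P_A) \cup S$ with the same endpoints as $P_A$. The building blocks are the usual flexible absorbers: for each vertex $v$ one shows that a positive-density family of short loose paths can swallow $v$, so that a random selection yields, with room to spare, enough absorbers to cover every vertex that might be left over. The expected number of absorbers available per vertex is $\Omega(1)$ precisely when $p$ exceeds the stated threshold.

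Third, on the vertices outside $P_A$ and $R$ I would cover almost all of them by a single long loose path — the bulk step — either greedily, extending the path one edge at a time while tracking the number of available continuations, or by reducing to an almost-perfect matching in an auxiliary hypergraph and invoking its a.a.s.\ existence. Finally I would merge everything into one cycle: apply the connecting lemma to join the ends of the long path to $P_A$, and then absorb all as-yet-uncovered vertices into $P_A$, producing a spanning loose cycle.

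The hard part, and the reason the threshold is the sharp $Cn\log n$ for $k=3$ rather than merely $n\log n \ll m$ as for $k \ge 4$, is the low-degree vertices. A loose Hamilton cycle needs every vertex in at least one edge, so below $m \sim \tfrac 1k n\log n$ isolated vertices survive and no such cycle exists; near this threshold a positive fraction of vertices have bounded degree, and one must guarantee that each of them can still be placed as an interior vertex of some edge and, if left to the end, absorbed. Ensuring that the connecting and absorbing steps never fail on the sparse part of the degree sequence is exactly where the constant $C$ enters when $k=3$; for $k \ge 4$ each vertex lies in $\Omega(n^{k-2})$ potential edges, so any $m \gg n\log n$ leaves ample slack and the constant can be dropped.
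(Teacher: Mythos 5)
This theorem is not proved in the paper at all: it is quoted from the cited works of Frieze, Dudek--Frieze, and Dudek--Frieze--Loh--Speiss, so the relevant comparison is with those proofs. All of them reduce the construction of a loose Hamilton cycle to finding perfect matchings in auxiliary random hypergraphs and then invoke the Johansson--Kahn--Vu theorem on the Shamir problem; none of them uses the absorption method you propose. That is not an accident, and your sketch has a concrete, fatal gap at its very first step: the connecting lemma is false in the regime of the theorem. At $m = \Theta(n\log n)$ one has $p = \Theta\bigl((\log n)/n^{k-1}\bigr)$, and a loose path with $j$ edges joining two prescribed vertices $u,w$ has $j(k-1)-1$ interior vertices, so the expected number of such paths is
\begin{equation*}
\Theta\bigl(n^{j(k-1)-1}p^{j}\bigr)=\Theta\bigl((\log n)^{j}/n\bigr)=o(1)
\end{equation*}
for every fixed $j$. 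Hence for almost every pair $u,w$ there is \emph{no} bounded-length loose path between them, with or without a reservoir, and no second-moment argument can rescue a structure whose expected count tends to zero. The same computation kills the absorbers: any flexible gadget built from a bounded number of edges rooted at a prescribed vertex and a prescribed position has vanishing expected count, so the ``positive density of absorbers per vertex'' you assert does not hold. Your heuristic that the threshold is governed by low-degree vertices is also misleading here: isolated vertices explain why $m=\Omega(n\log n)$ is \emph{necessary}, but the difficulty in the upper bound is global (producing a spanning structure from $\Theta(\log n)$ edges per vertex), which is exactly what Johansson--Kahn--Vu supplies and what local absorption cannot.

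Absorption in the style you describe does work when the edge probability is polynomially large, and indeed it is the method behind Theorem~\ref{thm:gnm_tight} for $\ell$-overlapping cycles with $\ell\ge 2$ (where $m\ge Cn^{\ell}$), as in the cited work of Allen, B\"ottcher, Kohayakawa and Person. But at the $n\log n$ threshold for loose cycles the approach collapses at the connecting step, so the proposal cannot be repaired without replacing its core mechanism by a matching-based argument of the Johansson--Kahn--Vu type.
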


\noindent From Theorem \ref{thm:gnm_loose} and the older embedding result \eqref{eq:hypembed}, in
\cite{DFRS} we concluded that there is a constant $C > 0$ such that if $C\log n \le d \ll
n^{1/2}$, then a.a.s. $\G^{(3)}(n,d)$ contains a loose Hamilton cycle. Furthermore, for every $k
\ge 4$ if $\log n \ll d \ll n^{1/2}$, then a.a.s. $\Rk(n,d)$ contains a loose Hamilton cycle.

Thresholds for $\ell$-Hamiltonicity of $\Gk(n,m)$ in the remaining cases, that is, for $\ell\ge 2$,
were recently determined by Dudek and Frieze~\cite{DF2} (see also Allen, B\"ottcher, Kohayakawa,
and Person~\cite{ABKP}).

\begin{theorem}[\cite{DF2}]\label{thm:gnm_tight}
\ \\[-0.3in]
\begin{enumerate}[(i)]

\item If $k>\ell=2$ and $m\gg n^2$,
then a.a.s. $\Gk(n,m)$ is $2$-Hamiltonian.

\item For all integers $k>\ell \ge 3$, there exists a constant $C$ such that if $m\geq Cn^{\ell}$
then a.a.s. $\Gk(n,m)$ is $\ell$-Hamiltonian.

\end{enumerate}
\end{theorem}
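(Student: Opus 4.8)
The plan is to prove $\ell$-Hamiltonicity directly by the second moment method applied to the number of $\ell$-Hamilton cycles. Since $\ell$-Hamiltonicity is monotone increasing and the models $\Gk(n,m)$ and $\Gk(n,p)$ with $p=m/\binom nk$ are asymptotically equivalent, I would first pass to $\Gk(n,p)$, prove the statement there, and transfer back by monotonicity. Write $s=k-\ell$ for the shift and $t=n/s$ for the number of edges of an $\ell$-Hamilton cycle, and let $X$ count the $\ell$-Hamilton cycles present in $\Gk(n,p)$. As each such cycle is a fixed set of $t$ edges present independently with probability $p$, the whole problem reduces to showing $\E[X^2]/(\E X)^2\to 1$, after which the second moment inequality $\prob{X>0}\ge(\E X)^2/\E[X^2]$ finishes.

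First I would dispose of the first moment. If $H$ is the number of labelled $\ell$-Hamilton cycles on $[n]$, then $\E X=H\,p^{t}$, and a direct count gives $\log H=n\log n-\Theta(n)$, the leading term from the $n!$ cyclic arrangements and the linear correction from the within-block and cyclic symmetries. Since $p\asymp(m/n^{\ell})\,n^{-s}$, one gets $t\log p=-n\log n+\Theta(n)$, so the two $n\log n$ terms cancel and $\log\E X=\Theta(n)$ with a sign governed by the constant in $m=\Theta(n^{\ell})$. Hence $\E X\to\infty$ once $C$ is large enough (for $\ell\ge3$) or once $m\gg n^{\ell}$ (for $\ell=2$).

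The heart of the argument, and the step I expect to be the main obstacle, is the second moment. Fixing one cycle $C_1$ and summing over all $C_2$, the ratio collapses to
\begin{equation*}
\frac{\E[X^2]}{(\E X)^2}=\frac1H\sum_{C_2}p^{-|E(C_1)\cap E(C_2)|},
\end{equation*}
so I must show this is dominated by the edge-disjoint pairs, each contributing $p^{0}=1$ and numbering $(1-o(1))H$. The overlap $E(C_1)\cap E(C_2)$ is a disjoint union of runs (sub-paths of $C_1$), so I would classify $C_2$ by the number of runs and their total length. The key estimate is that a fixed sub-path of $a$ edges, spanning $w=\ell+as$ vertices, occurs in a uniformly random $\ell$-Hamilton cycle with probability $\approx n^{1-w}=n^{1-\ell-as}$ (glue the sub-path into one block of the cyclic order). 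There are $\asymp t\asymp n$ places for such a run in $C_1$, each shared edge is weighted by $p^{-1}$, and $n^{-as}p^{-a}\asymp(n^{\ell}/m)^{a}$; thus single runs of length $a$ contribute of order $n\cdot n^{1-\ell-as}\cdot p^{-a}\asymp n^{2-\ell}(n^{\ell}/m)^{a}$.

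The decisive factor is $n^{2-\ell}$, and this is exactly what separates the two cases. For $\ell\ge3$ it equals $n^{-(\ell-2)}\to0$, so even at $m=Cn^{\ell}$ the positive-length runs contribute $o(1)$; the analogous count for $r$ runs yields $n^{r(2-\ell)}(n^{\ell}/m)^{a}$, again vanishing, and a single geometric summation gives $\E[X^2]/(\E X)^2=1+o(1)$. For $\ell=2$ the factor $n^{2-\ell}=1$ gives no help, and the run contributions are $o(1)$ only when $n^{2}/m\to0$, i.e.\ $m\gg n^{2}$, which is precisely why part (i) needs the stronger hypothesis. Carrying out this bookkeeping rigorously---tracking the number and lengths of the runs, the vertices shared at their endpoints, and the reconnection of the remainder of $C_2$---is the technical crux. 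Once the ratio tends to $1$, we obtain $X>0$ a.a.s.\ in $\Gk(n,p)$, and monotonicity together with the equivalence of the models transfers the conclusion to $\Gk(n,m)$, completing the proof.
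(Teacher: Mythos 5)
This statement is quoted from \cite{DF2} and is not proved in the present paper, so there is no in-paper argument to compare against; the comparison below is with the source. Your plan --- pass to $\Gk(n,p)$ with $p=m/\binom nk$, apply the second moment method to the number of $\ell$-Hamilton cycles, and classify the overlap of two cycles by runs --- is essentially the strategy actually used in \cite{DF2}, and your heuristic correctly isolates the decisive factor $n^{2-\ell}(n^{\ell}/m)^{a}$, which explains why $\ell=2$ requires $m\gg n^{2}$ while $\ell\ge 3$ only needs $m\ge Cn^{\ell}$ with $C$ large (to make the first moment grow despite the $e^{-\Theta(n)}$ correction). That said, what you have written is a plan rather than a proof: the entire content of the theorem lies in the bookkeeping you explicitly defer. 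In particular, you must count cycles $C_2$ whose intersection with $C_1$ is a prescribed union of runs, including configurations in which the shared edges are not arranged in $C_2$ as they are in $C_1$; you must handle the within-block permutation symmetries of an $\ell$-overlapping cycle (which affect both $H$ and the run-occurrence probability by exponential factors that must cancel); and for $\ell>k/2$ you must account for intersections of nonconsecutive edges. Since the single-run estimate is the easy part and the multi-run/non-aligned cases are where such arguments usually break or require extra ideas, the proposal is a correct and well-motivated outline of the known proof, but it cannot be credited as a complete argument.
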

\noindent
In view of Corollary~\ref{cor:properties}, Theorems~\ref{thm:gnm_loose} and~\ref{thm:gnm_tight} immediately imply the following result that was already anticipated by the authors in~\cite{DFRS}.

\begin{theorem}\label{thm:hamil}
\ \\[-0.3in]
\begin{enumerate}[(i)]

\item \label{thm:hamil:1} There is a constant $C > 0$ such that if $C\log n \le d \le n^{k-1}/C$, then a.a.s. $\R^{(3)}(n,d)$ contains a loose Hamilton cycle.
Furthermore, for every $k \ge 4$ there is a constant $C > 0$ such that if $\log n \ll d \le n^{k-1}/C$, then a.a.s. $\Rk(n,d)$ contains a loose Hamilton cycle.

\item \label{thm:hamil:2} For all integers $k>\ell=2$ there is a constant $C$ such that if $n\ll d \le n^{k-1}/C$
then a.a.s. $\Rk(n,d)$ contains a $2$-Hamilton cycle.

\item \label{thm:hamil:3} For all integers $k>\ell \ge 3$ there is a constant $C$ such that if $Cn^{\ell-1} \le d \le n^{k-1}/C$
then a.a.s. $\Rk(n,d)$ contains an $\ell$-Hamilton cycle.
\end{enumerate}
\end{theorem}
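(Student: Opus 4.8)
The plan is to derive all three parts as immediate consequences of Corollary~\ref{cor:properties} (equivalently, of the coupling supplied by Theorem~\ref{thm:embed}) applied to the threshold results of Theorems~\ref{thm:gnm_loose} and~\ref{thm:gnm_tight}. The first step, common to all three parts, is to record that each property under consideration---``contains a loose Hamilton cycle'' and ``is $\ell$-Hamiltonian''---is a monotone increasing $k$-graph property: it is preserved under the addition of edges and is invariant under relabelling of the vertices. Hence each lies in the scope of Corollary~\ref{cor:properties}, and it remains only to produce, in each case, an admissible pair $(\gamma,m)$. To this end I would note that \eqref{eq:gamma} requires $\gamma \ge C\left((d/n^{k-1}+(\log n)/d)^{1/3}+1/n\right)$ together with $\gamma<1$. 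In every part the upper bound $d \le n^{k-1}/C$ forces $d/n^{k-1}\le 1/C$, while the lower bounds on $d$ force $(\log n)/d\to 0$; thus the right-hand side is bounded by a constant strictly below $1$ once $C$ is chosen large relative to the embedding constant of Theorem~\ref{thm:embed}. A valid $\gamma$ (a small constant at the very top of the range, and $o(1)$ below it) therefore exists, and I would set $m=\lfloor(1-\gamma)nd/k\rfloor$, so that $m\sim nd/k$ and, in particular, $\gamma$ may be taken $\le 1/2$.

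The second step is to verify that this $m$ clears the relevant threshold. For part (i) with $k=3$ we have $m\ge \tfrac12\cdot nd/3 = nd/6$, which exceeds the threshold $C'n\log n$ of Theorem~\ref{thm:gnm_loose} as soon as $d\ge 6C'\log n$; the case $k\ge 4$ is identical, using $d\gg\log n$ to get $m\sim nd/k\gg n\log n$. For part (ii), $d\gg n$ gives $m\sim nd/k\gg n^2$, which is exactly the hypothesis of Theorem~\ref{thm:gnm_tight}(i). For part (iii), $d\ge Cn^{\ell-1}$ gives $m\ge (1-\gamma)(C/k)n^{\ell}$, exceeding the threshold $C'n^{\ell}$ of Theorem~\ref{thm:gnm_tight}(ii) once $C$ is large relative to $C'$ and $k$. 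In each case one concludes $\Gk(n,m)\in\Pp$ a.a.s., and then Corollary~\ref{cor:properties} (or directly the coupling $\Gk(n,m)\subset\Rk(n,d)$ of Theorem~\ref{thm:embed}) transfers the property to $\Rk(n,d)$.

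I do not expect any genuine difficulty here; the only point requiring care is the bookkeeping of constants. A single constant $C$ in the statement must simultaneously make \eqref{eq:gamma} satisfiable (so that the coupling exists) and make $m\sim nd/k$ clear the Hamiltonicity threshold. Since both demands are lower bounds on $C$, it suffices to take $C$ to be the larger of the two, so the apparent circularity dissolves. One further minor remark: Corollary~\ref{cor:properties} is stated for $d\ll n^{k-1}$, whereas we permit $d$ up to $n^{k-1}/C$; at the extreme top of the range one simply invokes Theorem~\ref{thm:embed} directly, whose only restriction on $d$ enters through \eqref{eq:gamma} and is met by the $\gamma$ chosen above. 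Finally, the divisibility hypotheses---$(k-\ell)\mid n$ for an $\ell$-Hamilton cycle to span the vertex set, and $k\mid nd$ for $\Rk(n,d)$ to be defined---are assumed throughout the paper and call for no additional comment.
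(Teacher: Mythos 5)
Your proposal is correct and takes essentially the same route as the paper, which derives Theorem~\ref{thm:hamil} in a single line as an immediate consequence of Corollary~\ref{cor:properties} combined with Theorems~\ref{thm:gnm_loose} and~\ref{thm:gnm_tight}. Your write-up merely supplies the monotonicity check and the constant bookkeeping (choice of $\gamma$ and verification that $m\sim nd/k$ clears each threshold) that the paper leaves implicit.
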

We conjecture that in the cases (ii) and (iii) (but not (i)) the assumed lower bound for $d$ is actually a threshold for Hamiltonicity in $\hcnd$, see Section \ref{sec:concluding}.
\noindent

\subsection{Structure of the Paper}\label{subsec:structure}
In the following section we define a $k$-graph process $(\R(t))_t$ which reveals edges of the random
$d$-regular $k$-graph one at a time. Then we state a crucial Lemma \ref{lem:Tlives}, which says, loosely
speaking, that unless we are very close to the end of the process, the conditional distribution of
the $(t+1)$-th edge is approximately uniform over the complement of $\R(t)$. Based on  Lemma
\ref{lem:Tlives}, we show that a.a.s. $\Gk(n,m)$ can be embedded in $\Rk(n,d)$,  by refining a
coupling similar to the one the we used in \cite{DFRS} and thus proving Theorem \ref{thm:embed}.

In Section \ref{sec:lemmaprep} we prove auxiliary results needed in the proof
 of Lemma
\ref{lem:Tlives}. They mainly reflect the phenomenon that a typical trajectory of the $d$-regular
process $(\R(t))_t$ has concentrated local parameters. In particular, concentration of vertex
degrees is deduced from a Chernoff-type inequality (the only ``external'' result used in the
paper), while (one-sided) concentration of common degrees of sets of vertices  is obtained by an
application of the switching technique (a similar application appeared in \cite{KSVW}).

In Section \ref{sec:lemmaproof} we prove
 Lemma \ref{lem:Tlives}. First we rephrase it as an
 enumerative problem (counting the number of $d$-regular extensions of a given $k$-graph).
 We prove Lemma \ref{lem:Tlives} by estimating the ratio of the numbers of extensions of two $k$-graphs which differ just in two edges.
   For this we define two random multi-$k$-graphs (via the configuration model) and couple them using yet
   another switching.

\section{Proof of  Theorem \ref{thm:embed}}\label{sec:pfThm}
We often drop the superscript in notations like $\Gk$ and $\Rk$
whenever $k$ is clear from the context.

Let $K_n$  denote the complete $k$-graph on vertex set $[n]$. Recall the standard $k$-graph
process $\G(t), t = 0, \dots, \binom n k$ which starts with the empty $k$-graph $\G(0) =
([n],\emptyset)$ and at each time step $t \ge 1$ adds an edge $\eps_t$ drawn from $K_n
\setminus \G(t-1)$ uniformly at random. We treat $\G(t)$ as an \emph{ordered $k$-graph} (that is, with an ordering of edges)
and write
\begin{equation*}
\G(t) = (\eps_1, \dots, \eps_t), \qquad t=0,\dots, \binom n k.
\end{equation*}
Of course,  the random uniform $k$-graph $\G(n,m)$ can be obtained from $\G(m)$ by
ignoring the ordering of the edges.

Our approach is to represent $\R(n,d)$ as an outcome of another $k$-graph process which, to some
extent, behaves similarly to $(\G(t))_t$. For this,  generate
a random $d$-regular $k$-graph $\R(n,d)$ and choose an ordering $(\eta_1, \dots, \eta_M)$ of its
$$M:=\frac{nd}k$$ edges uniformly at random. Revealing the edges of $\R(n,d)$ in that order one by one,
 we obtain a
\emph{regular $k$-graph} process
\begin{equation*}
\R(t) = (\eta_1, \dots, \eta_t), \qquad t = 0, \dots, M.
\end{equation*}

For every ordered $k$-graph $G$ with $t$ edges and every edge $e \in K_n \setminus G$ we clearly have
\begin{equation*}
\pc{\eps_{t+1} = e}{\G(t) = G} = \frac{1}{\binom n k - t}.
\end{equation*}
This is not true for $\R(t)$, except for the very first step $t=0$. However, it turns out that for
the most of the time, the conditional distribution of the next edge in the process $\R(t)$ is
approximately uniform, which is made precise by the lemma below. To formulate it we need some more
definitions.

Given an ordered $k$-graph $G$ , let $\Rr_G(n,d)$ be the family of \emph{extensions} of $G$, that is,
ordered $d$-regular $k$-graphs the first edges of which are equal to $G$. More precisely, setting
$G=(e_1,\dots,e_t)$,
$$\Rr_G(n,d)=\{H=(f_1,\dots,f_M): f_i=e_i, i=1,\dots,t,\mbox{ and }
H\in\Rck(n,d)\}.$$ We say that a $k$-graph $G$ with $t \le M$ edges is
\emph{admissible}, if $\Rr_G(n,d)\neq\emptyset$ or, equivalently, $\prob{\R(t) = G} > 0$. We define, for admissible $G$,
\begin{equation}\label{eq:pt}
p_{t+1}(e|G) := \pc{\eta_{t+1} = e}{\R(t) = G}, \quad t =0, \dots, M - 1.
\end{equation}
Given $\epsilon \in (0,1)$, we define  events
\begin{equation}\label{eq:almostuniform}
 \ac_t = \Big\{ p_{t+1}(e|\R(t)) \ge \frac{1-\epsilon}{\binom n k - t} \text{ for every } e \in K_n \setminus \R(t)\Big\}, \quad t = 0, \dots, M - 1.
\end{equation}
Now we are ready to state the main ingredient of the proof of Theorem \ref{thm:embed}.
\begin{lemma}
\label{lem:Tlives} 
Suppose that $\epsilon = \epsilon(n) \in (0,1)$ is such that $(1-\epsilon)M$ is an integer, and consider the event
\begin{equation*}
 \ac := \ac_0 \cap \dots \cap \ac_{(1-\epsilon)M - 1}.
\end{equation*}
For every $k \ge 2$ there is a positive constant $C'$ such that whenever
  $\epsilon$ and $d=d(n)$ satisfy
\begin{equation}\label{eq:epsilon}
C' \left( (d/n^{k-1} + (\log n)/d)^{1/3} + 1/n\right) \le \epsilon < 1
\end{equation} then the event $\ac$ occurs \text{a.a.s.}
\end{lemma}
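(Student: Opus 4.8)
The plan is to first turn the probabilistic statement into a counting one. Write $N(G):=|\{H\in\Rck(n,d):G\subseteq H\}|$ for the number of (unordered) $d$-regular $k$-graphs containing the edges of $G$, so that $|\Rr_G(n,d)|=N(G)\,(M-t)!$. Counting in two stages (first a graph $H\in\Rck(n,d)$, then a uniform ordering of its $M$ edges) gives $\prob{\R(t)=G}=N(G)/(|\Rck(n,d)|\,M^{\underline t})$, where $M^{\underline t}=M(M-1)\cdots(M-t+1)$, and dividing the expressions for $G+e$ and $G$ yields the clean identity
\[
p_{t+1}(e|G)=\frac{|\Rr_{G+e}(n,d)|}{|\Rr_G(n,d)|}=\frac{1}{M-t}\cdot\frac{N(G+e)}{N(G)}.
\]
Since every extension counted by $|\Rr_G(n,d)|$ has its $(t+1)$-st edge in $K_n\setminus G$, summing over $e$ gives $\sum_e p_{t+1}(e|G)=1$, so the average of $p_{t+1}(e|G)$ over the $\binom nk-t$ edges $e\in K_n\setminus G$ is \emph{exactly} the uniform value $1/(\binom nk-t)$. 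Hence $\ac_t$ asks precisely that no edge fall much below average, i.e.\ that $N(G+e)/N(G)\ge(1-\epsilon)(M-t)/(\binom nk-t)$ for all $e\in K_n\setminus\R(t)$. Equivalently, $N(G+e)/N(G)=\prob{e\in F}$, where $F$ is a uniformly random simple $k$-graph with residual degree sequence $\dd^G:=(d-\deg_G(v))_v$ that is edge-disjoint from $G$, and we must bound this inclusion probability from below, uniformly over $e$.

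\textbf{Reduction to ``nice'' hypergraphs.}
I would next separate the randomness of the trajectory from the enumeration. Call an ordered $k$-graph $G$ with $t\le(1-\epsilon)M$ edges \emph{nice} if (i) every residual degree satisfies $s_v:=d-\deg_G(v)=(1+O(\epsilon))\bar s$, where $\bar s:=d-kt/n=k(M-t)/n\ge\epsilon d$, and (ii) every co-degree $\deg_G(S)$ with $2\le|S|\le k-1$ obeys a suitable upper bound. The claim then splits into: (a) for every $t\le(1-\epsilon)M$, a.a.s.\ $\R(t)$ is nice, with failure probability small enough that a union bound over $t$ (and over $v$ and $S$) keeps all $\R(t)$ nice simultaneously; and (b) for \emph{every} nice $G$ and every $e\notin G$, the ratio bound above holds as a deterministic enumerative inequality. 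Together these give $\ac$ a.a.s. For part (a), conditioning on $\R(n,d)=H$ makes $\R(t)$ a uniform $t$-subset of the $M$ edges of $H$, so $\deg_{\R(t)}(v)$ and $\deg_{\R(t)}(S)$ are hypergeometric; the degree concentration in (i) follows from a Chernoff-type inequality \emph{uniformly over $H$}. Because $\R(t)\subseteq H$, co-degrees of $\R(t)$ are dominated by those of $H$, so the one-sided bound (ii) reduces to an upper-tail bound on the co-degrees of $\R(n,d)$ itself, supplied by the switching estimates of Section~\ref{sec:lemmaprep} (in the spirit of \cite{KSVW}).

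\textbf{The enumerative core.}
For the deterministic inequality in (b) I would estimate $N(G)$ and $N(G+e)$ through the configuration model on the residual stubs: assign each vertex $v$ a set of $s_v$ stubs, partition the $|W|=\sum_v s_v=k(M-t)$ stubs into blocks of size $k$, and project to a multi-$k$-graph. Each simple, $G$-avoiding realisation is hit exactly $\prod_v s_v!$ times, so up to this factor $N(G)$ counts the configurations that are simple and edge-disjoint from $G$, while $N(G+e)$ corresponds to the model with degree sequence $\dd^G-\mathbf 1_e$ together with the forced edge $e$. A direct configuration count gives the main term
\[
\frac{N(G+e)}{N(G)}=\prob{e\in F}\approx\frac{\big(\prod_{v\in e}s_v\big)(M-t)}{\binom{|W|}{k}}\approx\frac{M-t}{\binom nk},
\]
where the last step uses niceness to replace each $s_v$ by $\bar s$ and $|W|$ by $n\bar s$. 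The remaining task is to show that passing to the \emph{conditioned} (simple, $G$- and $e$-avoiding) models perturbs this ratio only by a factor $1+O(\epsilon)$. I would do this by coupling the two configuration models through a switching that inserts or deletes the stub-block realising $e$ and locally rebalances the rest, so that the probabilities of landing in the simple, $G$-avoiding event agree up to $1+O(\epsilon)$; niceness controls both the number of available switchings (through the $s_v$) and the chance of creating a forbidden coincidence---a repeated edge, or an edge meeting $G$ or $e$---which is of order the edge density $\sim d/n^{k-1}$.

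\textbf{Main obstacle.}
The crux is part (b): proving the ratio bound \emph{uniformly over all $e$}, including the worst case where $e$ sits on vertices of the smallest residual degree, and with an error matching the threshold~\eqref{eq:epsilon} for $\epsilon$. Two effects must be balanced. The multiplicative error from replacing $\prod_{v\in e}s_v$ by $\bar s^{\,k}$ is governed by the relative fluctuation of residual degrees; the binding case is the end of the process, where $\bar s\approx\epsilon d$ while the variance of $\deg_G(v)$ is also of order $\bar s$, so a \emph{variance-aware} Chernoff bound guarantees fluctuations only up to $\sqrt{\bar s\log n}$, i.e.\ relative error $\sqrt{\log n/(\epsilon d)}$. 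Forcing this below $\epsilon$ demands $\epsilon^3\ge\log n/d$, which is exactly what produces the cube-root exponent in $(\log n/d)^{1/3}$; a plain relative Chernoff bound would only yield the weaker exponent $1/4$, so the variance-aware form is genuinely essential. The second effect, the simplicity correction handled by the switching coupling, contributes the density term $d/n^{k-1}$ (amplified by bounded powers of $1/\epsilon$ near the end), and combining the two yields~\eqref{eq:epsilon}. Making the switching coupling deliver a clean $1+O(\epsilon)$ factor for \emph{every} $e$ at once, rather than merely on average, is the most delicate point.
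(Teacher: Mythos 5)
Your route is essentially the paper's: the exact averaging identity that reduces $\ac_t$ to bounding the ratio $|\Rr_{G\cup e}(n,d)|/|\Rr_{G\cup f}(n,d)|$ for a single above-average edge $f$, hypergeometric concentration of residual degrees along the trajectory, the configuration model to split that ratio into a product of residual degrees times a ratio of simplicity probabilities, and a switching-based coupling for the latter; your accounting of where the exponent $1/3$ comes from also matches the paper's.

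There is, however, one concrete gap in where you attach the codegree control, and it is quantitatively significant. You fold the pair-degree/codegree bounds into the definition of a \emph{nice trajectory}, arguing that codegrees of $\R(t)$ are dominated by those of $\R(n,d)$, and you then want step (b) to be a deterministic consequence of niceness of $G$. First, the quantities the coupling must control are codegree-type statistics of the \emph{random $d$-regular extension} of $G$ (e.g.\ the number of edges of $H\setminus G$ through $v_i$ whose other $k-1$ vertices form an edge of $H$ with $u_i$); these are not functions of $G$, so your step (b) cannot be deterministic in a nice $G$ — it needs a separate probabilistic switching lemma about the uniform extension of an arbitrary admissible $G$ with bounded residual degrees, applied conditionally on $\R(t)=G$ (this is the role of Claim~\ref{lem:codeg} in the paper). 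Second, and more seriously, dominating by the full graph loses the factor $\tau=1-t/M$. The argument needs the pair degree of the \emph{residual} object to be $O(\tau d/n)+O(\log n)$, so that the corresponding bad-event probability in the coupling is $O(1/n)+O(\log n/(\tau d))\le\epsilon/16$, using only $\epsilon\ge C'/n$ — this is precisely why the $1/n$ term appears in \eqref{eq:epsilon} (see \eqref{eq:loops}). With a $\tau$-free bound $O(d/n)$ inherited from $\R(n,d)$, that term becomes $O(1/(\tau n))=O(1/(\epsilon n))$, and forcing it below $\epsilon$ requires $\epsilon\gtrsim n^{-1/2}$, which \eqref{eq:epsilon} does not guarantee once $k\ge 5$: there $\epsilon$ may be as small as roughly $(\log n)^{1/6}n^{-(k-1)/6}\ll n^{-1/2}$. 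The repair is exactly the paper's device: prove the switching estimates directly for the residual extension with thresholds $\ell_1=C_1\tau d/n$ and $\ell_2=C_2\tau d^2/n^{k-1}$ proportional to $\tau$, rather than inheriting $\tau$-free bounds from the whole graph (alternatively, one could recover the factor $\tau$ by an additional hypergeometric concentration step for $\deg_{H\setminus\R(t)}(u,v)$ given $H$, but your proposal does not do this).
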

 From Lemma \ref{lem:Tlives}, which is proved in Section \ref{sec:lemmaproof}, we deduce Theorem \ref{thm:embed}
  using a coupling similar to the one which was used in~\cite{DFRS}.
\begin{proof}[Proof of Theorem \ref{thm:embed}]
Clearly, we can pick $\epsilon \le \gamma /3$ such that $(1-\epsilon)M$ is integer and \eqref{eq:gamma} implies \eqref{eq:epsilon} with $C'$ being some constant multiple of $C$.

Let us first outline the proof. We will define a $k$-graph process $\R'(t) := (\eta'_1, \dots,
\eta'_t), t = 0, \dots, M$ such that for every admissible $k$-graph $G$ with $t \le M -1$ edges,
\begin{equation}\label{eq:etaprime}
    \pc{\eta'_{t+1} = e}{\R'(t) = G} = p_{t+1}(e | G).
\end{equation}
In view of \eqref{eq:etaprime}, the distribution of $\R'(M)$ is the same as the one of $\R(M)$ and
thus we can define $\R(n,d)$ as the $k$-graph $\R'(M)$ with order of edges ignored. Then we will show
that a.a.s. $\G(n,m)$ can be  sampled from the subhypergraph $\R'((1-\epsilon)M)$ of $\R'(M)$.

Now come the details. Set $\R'(0)$ to be an empty vector and define $\R'(t)$ inductively (for $t =
1, 2, \dots$) as follows. Suppose that $k$-graphs $R_t =\R'(t)$ and $G_t = \G(t)$ have been exposed.
Draw $\eps_{t + 1}$ uniformly at random from $K_n \setminus G_t$ and, independently, generate a
Bernoulli random variable $\xi_{t+1}$ with the probability of success  $1 - \epsilon$. If event
$\ac_t$ has occured, that is,
\begin{equation}\label{eq:coupling}
p_{t+1}(e | R_t) \ge \frac{1-\epsilon}{\binom n k - t} \quad \text{ for every } \quad e \in K_n
\setminus R_t,
\end{equation}
then  draw a random edge $\zeta_{t+1} \in K_n \setminus R_t$ according to the distribution
\begin{equation*}
\prob{\zeta_{t+1} = e|\R'(t) = R_t} := \frac{p_{t+1}(e|R_t) - (1-\epsilon)/(\binom n k - t) }{\epsilon} \ge 0,
\end{equation*}
where the inequality holds by \eqref{eq:coupling}. Observe also that
\begin{equation*}
\sum_{e\in K_n \setminus R_t}\prob{\zeta_{t+1} = e|\R'(t) = R_t}=1,
\end{equation*}
so $\zeta_{t+1}$ has a well-defined distribution. Finally, fix an arbitrary bijection $f_{R_t, G_t} : R_t \setminus G_t \to G_t \setminus R_t$ between the sets of edges and define
\begin{equation}\label{coupled}
\eta'_{t+1} = \begin{cases}
\eps_{t+1}, &\text{ if } \xi_{t + 1} = 1, \eps_{t+1} \in  K_n \setminus R_t,\\
f_{R_t, G_t}(\eps_{t+1}), &\text{ if } \xi_{t + 1} = 1, \eps_{t+1} \in R_t,\\
\zeta_{t+1}, &\text{ if } \xi_{t + 1} = 0. \\
\end{cases}
\end{equation}
If the event $\ac_t$ fails, then $\eta'_{t+1}$ is sampled
directly (without defining $\zeta_{t+1}$) according to probabilities \eqref{eq:pt}. Such a
definition of $\eta'_{t+1}$ ensures that
\begin{equation}\label{eq:implies}
\ac_t \cap \{\xi_{t+1} = 1\} \quad \implies \quad \eps_{t+1} \in \R'(t+1).
\end{equation}
 Further, define a random subsequence of edges of $\G((1-\epsilon)M)$,
\begin{equation*}
S := \left\{ \eps_i : \xi_i = 1\;, i\le(1-\epsilon)M \right\}.
\end{equation*}
Conditioning on the vector $(\xi_i)$ determines $|S|$. If $|S| \ge m$, we define $\G(n,m)$ to have the edge set consisting of the first $m$ edges of $S$ (note that since the vectors $(\xi_i)$ and $(\eps_i)$ are independent, these $m$ edges are uniformly distributed), and if $|S| < m$, then we define $\G(n,m)$ as a graph with edges $\{\eps_1, \dots, \eps_m\}$.

Let event $\ac$ be as in Lemma \ref{lem:Tlives}. The crucial thing is that by \eqref{eq:implies} we have
\begin{equation*}
\ac \quad \implies \quad S \subset \R'(M).
\end{equation*}
Therefore
\begin{equation*}
\prob{\G(n,m) \subset \R(n,d)} \ge \prob{\{|S| \ge m\}\cap \ac }.
\end{equation*}
Since by Lemma \ref{lem:Tlives} event $\ac$ holds a.a.s., to complete the proof it suffices to show that
$\prob{|S| < m} \to 0$.

To this end, note that $|S|$ is a binomial random variable, namely,
\begin{equation*}
|S| = \sum_{i = 1}^{ (1-\epsilon)M }\xi_i \sim \Bi( (1-\epsilon)M , 1-\epsilon ),
\end{equation*}
with
\begin{equation}\label{eq:EVar}
\E |S| \ge (1 - 2\epsilon) M \quad \text{and} \quad \Var |S| = (1 - \epsilon)^2\epsilon M \le \epsilon M.
\end{equation}

Recall that $\epsilon \le \gamma/3$ and thus $m = (1-\gamma) M \le  (1 - 3\epsilon)M $. By \eqref{eq:EVar}, Chebyshev's inequality, and the inequality $\epsilon\ge
C'\log n/d$, which follows from~\eqref{eq:epsilon}, we get
\begin{equation}\label{eq:Sm}
    \prob{|S| < m} \le \prob{|S| - \E |S| < - \epsilon M } \le \frac{\epsilon M}{(\epsilon M)^2} = \frac{k}{\epsilon nd} \le  \frac{k}{C'n\log n}\to 0.
\end{equation}
\end{proof}

\section{Preparations for the Proof of Lemma \ref{lem:Tlives}}\label{sec:lemmaprep}
Throughout this section we adopt the assumptions of Lemma \ref{lem:Tlives}, that is, $(1-\epsilon)M$ is
an integer and \eqref{eq:epsilon} holds with a sufficiently large $C' = C'(k) \ge 1$. In
particular,
\begin{equation}\label{eq:logd}
\epsilon \ge C' (\log n / d)^\alpha,
\end{equation}
\begin{equation}\label{eq:dn}
\epsilon \ge C' (d / n^{k-1})^\alpha
\end{equation}
for every $\alpha \ge 1/3$, and
\begin{equation}\label{eq:n}
\epsilon \ge C'/n.
\end{equation}
 Given a $k$-graph $G$ with maximum degree at most $d$, let us define the \emph{residual degree} of a vertex $v\in V(G)$ as
\begin{equation*}
r_G(v) = d - \deg_{G}(v).
\end{equation*}
We begin our preparations toward the proof of Lemma \ref{lem:Tlives}  with a fact which allows one
to control the residual degrees of the evolving $k$-graph $\R(t)=\left( \eta_1, \dots, \eta_t \right)$.
For a vertex $v \in [n]$ and $t = 0, \dots, M$, define random variables
\begin{equation*}
X_t(v) = r_{\R(t)}(v) = | \left\{ i \in (t, M] : v \in \eta_i \right\} |.
\end{equation*}
Given an integer $t \in [0, M]$, we will use a shorthand
\[
\tau=1-\frac tM.
\]
We will usually assume $t \le (1-\epsilon)M$, which implies $\tau \ge \epsilon$.

\begin{claim}
\label{clm:degrees}
 For every $k \ge 2$ there is a constant $a = a(k) > 0$ such that a.a.s.
\begin{equation}
\forall t \le (1-\epsilon)M, \quad \forall v \in [n],\quad |X_t(v) - \tau d| \le \sqrt{a \tau
d\log n}\le \tau d/2 - 1. \label{eq:deguppergeneral}
\end{equation}

\end{claim}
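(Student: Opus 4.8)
The plan is to identify the exact marginal law of each $X_t(v)$, apply a Chernoff-type tail bound for a fixed pair $(v,t)$, and then finish with a union bound over all such pairs; the deterministic second inequality in \eqref{eq:deguppergeneral} will follow separately from the lower bound on $\epsilon$. First I would pin down the distribution of $X_t(v)$. Since every member of $\Rck(n,d)$ is $d$-regular, once the unordered graph $\R(n,d)$ is drawn, vertex $v$ lies in exactly $d$ of the $M$ edges, no matter which graph was obtained. As $\R(t)$ consists of the first $t$ edges of a uniformly random ordering of these $M$ edges, $X_t(v)$ simply counts how many of the $d$ edges through $v$ fall into the last $M-t$ positions. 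This count is hypergeometric, and its law does not depend on which $d$-regular graph was sampled; hence, unconditionally,
\begin{equation*}
X_t(v) \sim \operatorname{Hyp}(M, d, M - t), \qquad \E X_t(v) = \frac{(M-t)d}{M} = \tau d.
\end{equation*}

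Next I would bound the deviation for a fixed pair $(v,t)$ with $t \le (1-\epsilon)M$, so that $\tau \ge \epsilon$. Writing $\mu = \tau d$ and $\lambda = \sqrt{a \tau d \log n}$, a Chernoff-type tail inequality for the hypergeometric distribution (sampling without replacement is at least as concentrated as sampling with replacement, so the binomial bound transfers) gives
\begin{equation*}
\Prob\left( |X_t(v) - \tau d| > \sqrt{a \tau d \log n}\,\right) \le 2\exp\left(-\frac{\lambda^2}{3\mu}\right) = 2\exp\left(-\frac{a\log n}{3}\right) = 2 n^{-a/3}.
\end{equation*}
This use is legitimate because the relative deviation $\lambda/\mu = \sqrt{a\log n/(\tau d)}$ is below $1$: by \eqref{eq:logd} with $\alpha = 1/3$ together with $\epsilon < 1$ one has $\tau d \ge \epsilon d \ge (C')^3 \log n \gg a\log n$ for $C'$ large. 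Since $\R(n,d)$ is simple, $M \le \binom nk < n^k$, so a union bound over the at most $n(M+1) < n^{k+1}$ pairs $(v,t)$ bounds the probability that the first inequality of \eqref{eq:deguppergeneral} fails by $2n^{k+1-a/3}$, which tends to $0$ once $a = a(k)$ is fixed with $a > 3(k+1)$.

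Finally, the second inequality $\sqrt{a\tau d\log n} \le \tau d/2 - 1$ is deterministic. Using $\tau \ge \epsilon$ and the estimate $\tau d \ge \epsilon d \ge (C')^3 \log n$ coming from \eqref{eq:logd}, and choosing $C' = C'(k)$ large enough relative to the already-fixed $a = a(k)$ that $\tau d \ge 16a\log n$, we get $\sqrt{a\tau d\log n} \le \tau d/4 \le \tau d/2 - 1$. I expect the only genuinely delicate point to be the regime $t \approx (1-\epsilon)M$, i.e. $\tau$ near its minimum $\epsilon$: there the mean $\mu = \tau d$ is smallest and the concentration weakest, so both the Chernoff step and the deterministic inequality hinge on \eqref{eq:logd}, which is exactly what guarantees $\epsilon d \gg \log n$ with room to spare. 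Among the lower bounds on $\epsilon$, only \eqref{eq:logd} is needed for this claim; \eqref{eq:dn} and \eqref{eq:n} play no role here.
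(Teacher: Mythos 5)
Your proposal is correct and follows essentially the same route as the paper: identify $X_t(v)$ as hypergeometric with mean $\tau d$ (noting the law is independent of which $d$-regular $k$-graph was drawn), apply a Chernoff-type bound valid for sampling without replacement with $x=\sqrt{a\tau d\log n}$, verify via \eqref{eq:logd} that the relative deviation is small enough for the bound to apply, union-bound over the fewer than $n^{k+1}$ pairs $(v,t)$, and dispose of the second inequality deterministically from $\tau d\ge\epsilon d\gg\log n$. The only differences are cosmetic (the paper cites Theorem~2.10 of \cite{JLR} explicitly and fixes $a=3(k+2)$, using $\alpha=1$ rather than $\alpha=1/3$ in \eqref{eq:logd}).
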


\begin{proof}
A crucial observation is that the concentration of the degrees depends solely on the random
ordering of the edges and not on the structure of the $k$-graph $\R(M)$. If we fix a $d$-regular $k$-graph
$H$ and condition $\R(M)$ to be a random permutation of the edges of $H$, then $X_t(v)$ is a
hypergeometric random variable with expectation
\[
\E X_t(v) = \frac{(M-t)d}{M} = \tau d.
\]
 Using Theorem~2.10 in \cite{JLR} together with inequalities (2.5) and (2.6) therein, we get
\begin{equation*}
\prob{|X_t(v) - \tau d| \ge x}  \le 2 \exp \left\{ - \frac{x^2}{2\tau d \left(1 + x/(3\tau d) \right)} \right\}.
\end{equation*}
Let $a = 3(k+2)$ and $x = \sqrt{a \tau d \log n }$.
Condition \eqref{eq:logd} with $\alpha = 1$ and $C'\ge 9a$ implies that
\begin{equation}\label{eq:taud}
\tau d \ge \epsilon d \ge C' \log n.
\end{equation}
Therefore
\begin{equation}\label{eq:xtau}
x / (\tau d)  = \sqrt{a\log n / (\tau d)} \le \sqrt{a \log n / (\epsilon d)} \le \sqrt{a/C'} \le 1/3.
\end{equation}
Hence,
\begin{equation*}
\prob{|X_t(v) - \tau d| \ge \sqrt{a \tau d \log n }} \le 2\exp \left\{ - \frac{a}{3} \log n \right\} = 2n^{-k - 2}.
\end{equation*}
Since we have fewer than $nM \le n^{k+1}$ choices of $t$ and $v$, the first inequality in
\eqref{eq:deguppergeneral} follows by taking the union bound.

The second inequality in \eqref{eq:deguppergeneral} follows from \eqref{eq:xtau}, since
\begin{equation*}
\sqrt{a \tau d \log n } = x \le \tau d / 3 \le \tau d / 2 - 1,
\end{equation*}
where the last inequality holds (for large enough $n$) by \eqref{eq:taud}.
\end{proof}

Recall that $\Rr_G(n,d)$ is the family of extensions of $G$ to a $d$-regular ordered $k$-graph. For a $k$-graph $H \in \Rr_G(n,d)$ define the \emph{common degree} (relative to subhypergraph $G \subseteq H$) of an ordered pair $(u,v)$ of vertices as
\begin{equation*}
\cod_{H|G}(u,v) = \left| \left\{ W \in \textstyle{\binom {[n]} {k-1}} : W \cup u \in H, W \cup v \in H  \setminus G\right\}\right|.
\end{equation*}
Note that $\cod_{H|G}(u,v)$ is not symmetric in $u$ and $v$. Also, define \emph{the degree of a
pair of vertices} $u, v$ as
\begin{equation*}
\deg_{H}(u,v) = \left| \left\{ e \in H : \{u, v\} \subset e\right\}\right|.
\end{equation*}

\begin{claim} 
\label{lem:codeg} Let $G$ be an admissible $k$-graph with $t + 1 \le (1 - \epsilon)M$ edges such that
\begin{equation}\label{eq:degG}
r_{G}(v) \le  2 \tau d
\qquad \forall  v \in [n].
\end{equation}
Suppose that $\R_{G}$ is a $k$-graph chosen uniformly at random from $\Rr_G(n,d)$. There are constants $C_0, C_1$, and $C_2$, depending on $k$ only such that the following holds.
\\
For each $e \in K_n \setminus G$,
\begin{equation}\label{eq:noedge}
\prob{e \in \R_{G}} \le \frac{C_0\tau d}{n^{k-1}}.
\end{equation}
Moreover, if $\ell \ge \ell_1 := C_1\tau d/n$, then for every $u, v \in [n], u \ne v$,
\begin{equation}\label{eq:pairdegs}
\prob{\deg_{\R_{G} \setminus G}(u,v) > \ell} \le 2^{-(\ell-\ell_1)}.
\end{equation}
Also, if $\ell \ge \ell_2 := C_2\tau d^2 / n^{k-1}$, then for every $u, v \in
[n], u \ne v$,
\begin{equation}\label{eq:degs}
\prob{\cod_{\R_{G}|G}(u,v) > \ell} \le 2^{-(\ell-\ell_2)}.
\end{equation}
\end{claim}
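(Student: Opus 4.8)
The plan is to prove all three bounds by the \emph{switching method}, converting each probabilistic statement into a comparison of cardinalities of families of extensions in $\Rr_G(n,d)$. Since the ordering of the last $M-(t+1)$ edges contributes the same factorial factor to every extension, I may work with unordered edge sets throughout and only compare ratios. The residual-degree hypothesis \eqref{eq:degG} will enter precisely through the reverse switching counts: it bounds by $2\tau d$ the number of edges of any extension that are incident to a given vertex and lie outside $G$ (the only edges a switch is allowed to touch), and this is what produces the powers of $\tau$ on the right-hand sides.

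For \eqref{eq:noedge} I would fix the edge $e=\{v_1,\dots,v_k\}$ and relocate it as a whole. Starting from an extension $H$ containing $e$, a \emph{forward switch} picks, for each $i$, a new edge $f_i\in H\setminus G$ and a vertex $w_i\in f_i$, and swaps $v_i$ with $w_i$; this replaces $e$ by the fresh edge $\{w_1,\dots,w_k\}$ and each $f_i$ by $(f_i\setminus w_i)\cup\{v_i\}$, preserving all degrees and destroying $e$. Up to lower-order feasibility constraints (distinctness, simplicity, not recreating $e$), the number of forward switches is $(1-o(1))(k\tau M)^k$. To reverse, given an extension not containing $e$ one chooses the fresh edge to be turned back into $e$ (at most $\tau M$ choices) together with, for each vertex $v_i$, a new edge of $H\setminus G$ through $v_i$, of which there are at most $r_G(v_i)\le 2\tau d$ by \eqref{eq:degG}. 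Double counting the switch pairs gives $\prob{e\in\R_G}\le B/F\le k!\,\tau M(2\tau d)^k/(k\tau M)^k$, which simplifies to $C_0\tau d/n^{k-1}$ once $M=nd/k$ is substituted.

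For the one-sided concentration bounds \eqref{eq:pairdegs} and \eqref{eq:degs} I would set up a geometric-decay argument. Writing $\mathcal B_j$ (resp.\ $\mathcal C_j$) for the set of extensions $H$ with $\deg_{H\setminus G}(u,v)=j$ (resp.\ $\cod_{H|G}(u,v)=j$), the goal is to show $|\mathcal B_{j+1}|/|\mathcal B_j|\le 1/2$ for $j\ge\ell_1$ and $|\mathcal C_{j+1}|/|\mathcal C_j|\le 1/2$ for $j\ge\ell_2$; summing the resulting geometric series over $j>\ell$ yields the stated tails $2^{-(\ell-\ell_1)}$ and $2^{-(\ell-\ell_2)}$. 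In both cases the forward switch selects one of the $j+1$ offending configurations and destroys it: for the pair degree it suffices to swap a single vertex out of one of the $j+1$ new edges through $\{u,v\}$ (the remaining $k-2$ vertices are unconstrained, so no codegree of a large set appears), giving $F\asymp (j+1)\tau M$; for the common degree the whole second leg $W\cup\{v\}$ is relocated as in \eqref{eq:noedge}, giving $F\asymp (j+1)(k\tau M)^k$. The reverse counts use \eqref{eq:degG} exactly as above and produce $B\asymp\tau^2d^2$ and $B\asymp d\,\tau M(2\tau d)^k$ respectively, so that $B/F\le 1/2$ precisely when $j+1\gtrsim\tau d/n$ and $j+1\gtrsim\tau d^2/n^{k-1}$, matching $\ell_1$ and $\ell_2$.

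The main obstacle is the bookkeeping that makes the geometric-decay step rigorous: one must check that a forward switch changes the relevant parameter by \emph{exactly} one, i.e.\ that destroying one configuration does not simultaneously create another (the modified edge $(f\setminus w)\cup\{v\}$, for instance, could in principle form a new cherry with an existing edge through $u$), and that the feasibility constraints discarded above cost only a $1-o(1)$ factor. I expect to handle this by showing that the number of switches with such side effects is of strictly smaller order than the main term, so that they are absorbed into the constants $C_1,C_2$; this is where the fact that $\tau M$ and $\tau d$ are large (guaranteed by $\tau\ge\epsilon$ together with \eqref{eq:logd} and $d\to\infty$) is used. The only genuinely delicate point specific to $k\ge 3$ is to keep every reverse count anchored at whole edges through prescribed \emph{vertices} (so that \eqref{eq:degG} applies) rather than through prescribed $(k-1)$-sets, whose degrees are not controlled; this is exactly why \eqref{eq:noedge} and \eqref{eq:degs} relocate an entire edge instead of performing a single-vertex swap.
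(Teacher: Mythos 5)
Your overall architecture matches the paper's: all three bounds are proved by switchings, \eqref{eq:noedge} by double counting between the families $\{H : e\in H\}$ and $\{H : e\notin H\}$, and \eqref{eq:pairdegs}, \eqref{eq:degs} by showing that the ratio of consecutive level sets is at most $1/2$ above $\ell_1$, $\ell_2$ and summing a geometric series; your reverse counts, anchored at vertex degrees via \eqref{eq:degG}, give exactly the paper's orders and thresholds. The gap is in the \emph{forward} counts of the specific switchings you chose. In your relocation switch for \eqref{eq:noedge}, the new edge created at $f_i$ is $(f_i\setminus w_i)\cup\{v_i\}$, which shares $k-1$ vertices with $f_i$; hence the number of choices of $(f_i,w_i)$ that would create a double edge is $\sum_{g\in H,\, g\ni v_i}\deg_H(g\setminus\{v_i\})$, a sum of degrees of \emph{$(k-1)$-sets}. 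Hypothesis \eqref{eq:degG} controls only vertex degrees; deterministically each $(k-1)$-set degree can be as large as $d$, so this correction is bounded only by $\min(d^2,knd)$, while the main term per coordinate is $k\tau M=\tau nd$. For $k\ge3$ and $d\gtrsim\tau n$ --- precisely the regime $n\ll d\ll n^{k-1}$ the theorem is designed to reach --- the correction is not of smaller order, and no bound derivable from \eqref{eq:degG} alone prevents an admissible $H$ with strong local clustering around $v_i$ from invalidating the lower bound on $\min_H F(H)$. The same defect appears in your single-vertex swap for \eqref{eq:pairdegs} (the edge $(g\setminus\{v\})\cup\{w\}$ duplicates an existing edge for about $\deg_H(g\setminus\{v\})\cdot 2\tau d$ choices) and in your switch for \eqref{eq:degs}. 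You correctly observe that reverse counts must be anchored at whole edges through prescribed \emph{vertices} rather than $(k-1)$-sets, but the identical constraint governs the forward-count corrections, and your switchings violate it there.

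The paper's switching is engineered to avoid exactly this: it takes the edge to be destroyed together with $k-1$ further edges of $H\setminus G$, all $k$ pairwise disjoint, and replaces them by the $k$ ``columns'' $f_j=\{v_{1,j},\dots,v_{k,j}\}$ of the resulting $k\times k$ vertex array. Each new edge then contains exactly one vertex from each chosen edge, so specifying a potential duplicate pins down a single vertex in each of $e_2,\dots,e_k$, the bad choices number at most $kd(2\tau d)^{k-1}$, and the relative error is $O(d/n^{k-1})=O(\epsilon/C')$. (As a bonus, in the \eqref{eq:pairdegs} case each column meets the destroyed edge in one vertex only, so the switch cannot create a new edge through $\{u,v\}$ --- a side effect you flagged but did not resolve.) To repair your argument you would either have to adopt this transposition switching, or first control $(k-1)$-set codegrees and restrict the double counting to ``nice'' extensions, which is a nontrivial restructuring. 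A minor further point: your geometric-decay step implicitly needs the level set at $\ell_1$ (resp.\ $\ell_2$) to be nonempty; as in the paper, this follows from positivity of the forward count once that count is correctly established.
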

\begin{proof}
To prove \eqref{eq:noedge}, fix $e \in K_n \setminus G$ and define families of ordered $k$-graphs
\begin{equation*}
\Rr_{e \in} = \left\{ H \in \Rr_{G}(n,d) : e \in H \right\} \quad \text{and} \quad \Rr_{e \notin} = \left\{ H \in \Rr_{G}(n,d) : e \notin H \right\}.
\end{equation*}

 In order to compare the sizes of $\Rr_{e \in}$ and $\Rr_{e \notin}$, define an auxiliary bipartite graph $B$ between $\Rr_{e \in}$ and $\Rr_{e \notin}$ in
 which $H \in \Rr_{e \in}$ is connected to $H' \in \Rr_{e \notin}$ whenever $H'$ can be obtained
 from $H$ by the following operation (known as \emph{switching} in the literature dating back to McKay~\cite{M85}).
 Let $e = e_1 = \{v_{1,1}\dots v_{1,k}\}$ and pick $k-1$ more edges
 $$e_i =
 \{v_{i,1}\dots v_{i,k}\} \in H \setminus G,\qquad i = 2, \dots, k$$ 
 (with vertices labeled in the increasing order within each edge) so that all $k$
 edges are disjoint. Replace, for each $j = 1, \dots, k$, the edge $e_j$ by
 $$f_j := \{v_{1,j}\dots v_{k,j}\}$$
  to obtain $H'$ (see Figure~\ref{fig:switching1}).

Let $f(H) = \deg_B(H)$ be the  number of $k$-graphs $H' \in \Rr_{e \notin}$ which can be obtained from $H$, and
$b(H') = \deg_B(H')$ be the number  of $k$-graphs $H \in \Rr_{e \in}$ from which $H'$ can be obtained. Thus,
\begin{equation}\label{eq:doublecounting}
|\Rr_{e \in}| \cdot \min_{H \in \Rr_{e \in}} f(H) \le |E(B)| \le |\Rr_{e \notin}| \cdot \max_{H' \in \Rr_{e \notin}} b(H').
\end{equation}

\begin{figure}
\begin{center}
\subfigure[]{
\input{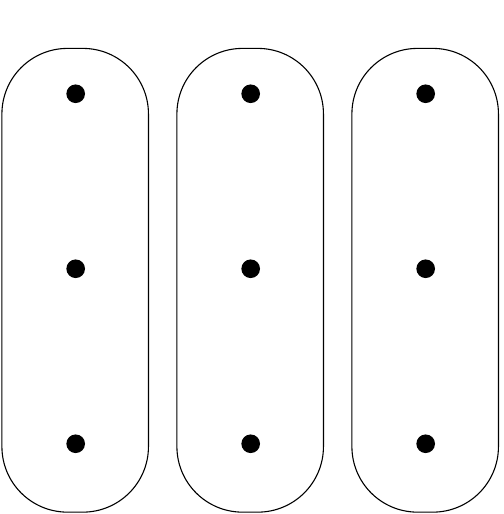_t}
\label{fig:switching1a}
}
\quad\quad\quad
\subfigure[]{
\input{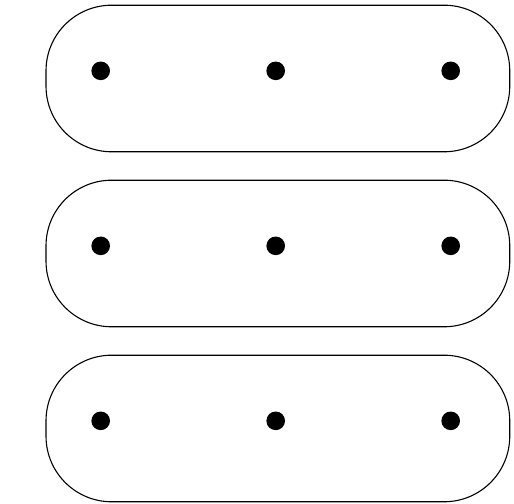_t}
\label{fig:switching1b}
}
\end{center}
\caption{Switching (for $k=3$): before~\subref{fig:switching1a} and after~\subref{fig:switching1b}.}
\label{fig:switching1}
\end{figure}

Note that $H \setminus G$ and $H' \setminus G$ each have $\tau M - 1$ edges and, by
\eqref{eq:degG}, maximum degrees at most $2 \tau d$. To estimate $f(H)$, note that because each
edge intersects at most $k\cdot 2\tau d$ other edges of $H\setminus G$, the number of ways to choose an unordered $(k-1)$-tuple $\{e_2, \dots, e_k\}$ is at least
\begin{equation}\label{eq:kminusone}
\frac{1}{(k-1)!}\prod_{i=1}^{k-1}(\tau M - 1 -i k\cdot2\tau d)\geq (\tau M - k^2\cdot 2\tau d)^{k-1}/(k-1)!.
\end{equation}
We have to subtract the $(k-1)$-tuples which are not allowed since they would create a double edge after the switching (by repeating some edge of $H$ which intersects $e_1$). Their number is at most $kd \cdot (2\tau d)^{k-1}$. Thus,
\begin{align*}
f(H) &\ge \frac{(\tau M - 2k^2\tau d)^{k-1}}{(k-1)!} - k(2\tau)^{k-1}d^k \\
&= \frac{(\tau M)^{k-1}}{(k-1)!} \left( \left(1 - \frac{2k^2d}{M}\right)^{k-1} - \frac{k!(2\tau)^{k-1}d^k}{(\tau M)^{k-1}}\right) \\
&= \frac{(\tau M)^{k-1}}{(k-1)!} \left( \left(1 - \frac{2k^3}{n}\right)^{k-1} - \frac{k! (2k)^{k-1}d}{n^{k-1}}\right) \\
&\ge \frac{(\tau M)^{k-1}}{(k-1)!} \left( 1 - \frac{2k^4}{n} - \frac{(2k)^{2k}d}{n^{k-1}}\right).
\end{align*}
By \eqref{eq:dn} with $\alpha = 1$, \eqref{eq:n}, and sufficiently large $C'$, we have
\begin{equation*}
\frac{2k^4}{n} + \frac{(2k)^{2k}d}{n^{k-1}} \le \frac{\epsilon (2k^4 + (2k)^{2k})}{C'} \le 1/2.
\end{equation*}
Hence,
\begin{equation}\label{eq:fHswitching}
f(H) \ge \frac{(\tau M)^{k-1}}{2(k-1)!}.
\end{equation}
Since $G$ is admissible, either $\Rr_{e \notin}$ or $\Rr_{e \in}$ is non-empty. If $\Rr_{e \in}$ is non-empty, then by \eqref{eq:doublecounting} and the fact that the right-hand side of \eqref{eq:fHswitching} is positive we get that $\Rr_{e \notin}$ is also non-empty.

In order to bound $b(H')$ from above note that there are at most $(2\tau d)^k$ ways to choose a sequence $f_1, \dots, f_k \in H' \setminus G$ such that $v_{1,i} \in f_i$ and we can reconstruct the $k-1$-tuple $e_2, \dots, e_k$ in at most $((k-1)!)^{k-1}$ ways (by fixing an ordering of vertices of $f_1$ and permuting vertices in other $f_i$'s). Therefore $b(H') \le ((k-1)!)^{k-1} \cdot (2\tau d)^k$. This, with \eqref{eq:doublecounting} and \eqref{eq:fHswitching} implies that
\begin{multline*}
\prob{e \in \R_{G}} = \frac{|\Rr_{e \in}|}{|\Rr_G(n,d)|} 
\le \frac{|\Rr_{e \in}|}{|\Rr_{e \notin}|} \\
\le  \frac{\max_{H' \in \Rr_{e \notin}} b(H')}{\min_{H \in \Rr_{e \in}} f(H)}
\le \frac{2((k-1)!)^k(2\tau d)^k}{(\tau M)^{k-1}} = \frac{C_0 \tau d}{n^{k-1}},
\end{multline*}
for some constant $C_0 = C_0(k)$. This concludes the proof of \eqref{eq:noedge}.

\medskip

To prove~\eqref{eq:pairdegs}, fix distinct $u, v \in [n]$, $u < v$, and define the families
\begin{equation*}
\Rr_1(\ell) = \left\{ H \in \Rr_{G}(n,d) : \deg_{H\setminus G}(u,v) = \ell \right\}, \qquad \ell= 0, 1, \dots .
\end{equation*}
Since $G$ is admissible, $\Rr_{G}(n,d)$ is non-empty and thus $\Rr_1(\ell)$ is nonempty for some $\ell \ge 0$. Let $L_1$ be the largest such $\ell$. From the argument below we will see that actually $\Rr_1(\ell)$ is non-empty for every $\ell = 0, \dots, L_1$.

In order to compare sizes of $\Rr_1(\ell)$ and $\Rr_1(\ell-1)$, $\ell \in [1, L]$, we define the following switching which maps a
$k$-graph $H \in \Rr_1(\ell)$ to a $k$-graph $H' \in \Rr_1(\ell-1)$. Select $e_1 \in H \setminus G$ contributing to
$\deg_{H\setminus G}(u,v)$ and pick $k-1$ edges $e_2, \dots, e_k \in H \setminus G$ so that $e_1,
\dots, e_k$ are disjoint. Writing $e_i = \{v_{i,1}\dots v_{i,k}\}, i = 1,\dots,k$ and, for definiteness, labeling vertices inside each $e_i$ in the increasing order, replace $e_1, \dots, e_k$ by $f_1, \dots, f_k$, where $f_j = \{v_{1,j}\dots v_{k,j}\}$, for
$j=1,\dots,k$ (as in  Figure~\ref{fig:switching1}).

Noting that $e_1$ can be chosen in $\ell$ ways, we
get a lower bound on $f(H)$ very similar to that in \eqref{eq:fHswitching}:
\begin{align}
\label{eq:fsecond}
f(H) &\ge \ell\left( (\tau M - 2k^2\tau d)^{k-1}/(k-1)! - k(2\tau)^{k-1}d^k \right) \ge \frac{\ell(\tau M)^{k-1}}{2(k-1)!}.
\end{align}
Since this implies $f(H) > 0$, we get that whenever $\Rr_1(\ell), \ell \ge 1$, then also $\Rr_1(\ell - 1)$ is non-empty. Thus, $\Rr_1(\ell)$ is non-empty for every $\ell = 0, \dots, L_1$, as mentioned above.

For the upper bound for $b(H')$ we choose two disjoint edges in $H'\setminus G$ containing $u$ and~$v$,
respectively, and then $k-2$ more edges in $H'\setminus G$ not containing $u$ and~$v$ so that all edges are disjoint. Crudely bounding number of permutations of vertices inside each of $f_1, \dots, f_k$ by $(k!)^k$, we get $b(H')\le (k!)^k(2\tau d)^2 (\tau M)^{k-2}$. We obtain, for $\ell = \ell_1, \dots, L_1$,
\begin{multline*}
    \frac{|\Rr_1(\ell)|}{|\Rr_1(\ell-1)|} 
		  \le  \frac{\max_{H' \in \Rr_1(\ell-1)} b(H')}{\min_{H \in \Rr_1(\ell)} f(H)} 
		  \le \frac{2(k!)^{k+1}(2\tau d)^2(\tau M)^{k-2}}{\ell(\tau M)^{k-1}} \\
			\le \frac{8(k!)^{k+1}\tau d}{\ell n} 
			\le \frac12,
\end{multline*}
by assumption $\ell \ge \ell_1 = C_1\tau d/n$ and appropriate choice of constant $C_1$. Further,
\begin{multline}\label{eq:prob_cod}
\prob{\deg_{\R_{G} \setminus G}(u,v) > \ell} = \sum_{i = \ell + 1}^{L_1} \frac{|\Rr_1(i)|}{|\Rr_{G}(n,d)|} \le \sum_{i = \ell + 1}^{L_1} \frac{|\Rr_1(i)|}{|\Rr_1(\ell_1)|} \\
= \sum_{i = \ell + 1}^{L_1} \prod_{j = \ell_1 + 1}^i \frac{|\Rr_1(j)|}{|\Rr_1(j-1)|}
\le \sum_{i > \ell} 2^{-(i - \ell_1)} = 2^{-(\ell - \ell_1)},
\end{multline}
which completes the proof of \eqref{eq:pairdegs}.

\medskip

It remains to show \eqref{eq:degs}. Fix an ordered pair $(u,v)$ of distinct vertices and define the families
\begin{equation*}
\Rr_2(\ell) = \left\{ H \in \Rr_{G}(n,d) : \cod_{H|G}(u,v) = \ell \right\}, \qquad \ell= 0, 1, \dots .
\end{equation*}
We compare sizes of $\Rr_2(\ell)$ and $\Rr_2(\ell-1)$ using the following switching.
Select two distinct edges $e_0 \in H$ and $e_1 \in H \setminus G$ contributing to $\cod_{H|G}(u,v)$, that
is, such that $e_0 \setminus \{u\} = e_1 \setminus \{v\}$; pick $k-1$ other edges $e_2, \dots, e_k \in H
\setminus G$ so that $e_1, \dots, e_k$ are disjoint. Writing $e_i = \{v_{i,1}\dots v_{i,k}\},
i = 1,\dots,k$ with $v=v_{1,1}$, replace $e_1, \dots, e_k$ by $f_1, \dots, f_k$, where $f_j = \{v_{1,j}\dots v_{k,j}\}$ for
$j=1,\dots,k$ (see Figure~\ref{fig:switching2}).
We estimate $f(H)$ by first fixing a pair $e_0, e_1$ in one of $\ell$ ways. The number of choices of $e_2, \dots, e_k$ is bounded as in \eqref{eq:kminusone}. However, we subtract not just at most $kd \cdot (2\tau d)^{k-1}$ $(k-1)$-tuples which may create double edges, but also $(k-1)$-tuples for which $(f_1 \setminus \{v\}) \cup \{u\} \in H$ which prevents $\cod(u,v)$ from being decreased. There are at most $d \cdot (2\tau d)^{k-1}$ of such $(k-1)$-tuples. Hence the bound is very similar to \eqref{eq:fsecond} and, omiting very similar calculations, we get
\begin{align*}
f(H) \ge \ell \left( \frac{ (\tau M - k^2 \cdot 2\tau d)^{k-1} }{(k-1)!} - (k+1)d \cdot (2\tau d)^{k-1}\right) \ge \frac{\ell (\tau M)^{k-1}}{2(k-1)!}.
\end{align*} 

Writing $L_2$ for the largest $\ell$ such that $\Rr_2(\ell)$ is non-empty, we get that $\Rr_2(\ell)$ is nonempty for $\ell = 0, \dots, L_2$, by a similar argument as with the previous switching.

\begin{figure}
\begin{center}
\subfigure[]{
\input{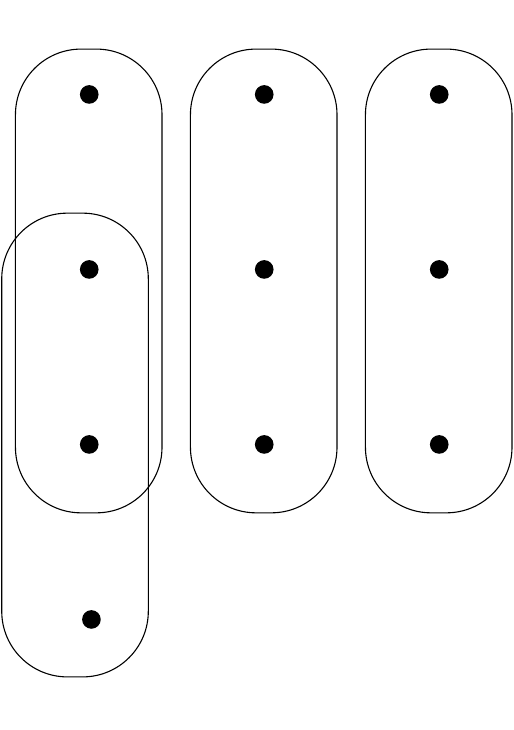_t}
\label{fig:switching2a}
}
\quad\quad\quad
\subfigure[]{
\input{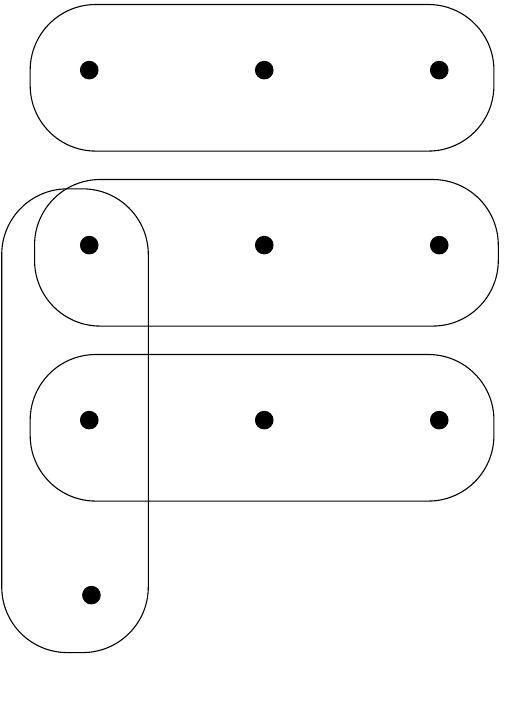_t}
\label{fig:switching2b}
}
\end{center}
\caption{Switching (for $k=3$): before~\subref{fig:switching2a} and after~\subref{fig:switching2b}.}
\label{fig:switching2}
\end{figure}

Conversely, $H$ can be reconstructed from $H'$ by choosing an edge $e_0 \in H'$ containing $u$ but not containing $v$ and then $k$ disjoint edges $f_j \in H' \setminus G$, each containing exactly one vertex from $(e_0 \setminus \{u\}) \cup \{v\}$ and permuting the vertices inside $f_2 \setminus \{v_{1,2}\}, \dots, f_k \setminus \{v_{1,k}\}$ in at most $((k-1)!)^{k-1}$ ways. Therefore $b(H') \le ((k-1)!)^{k-1} d (2\tau d)^k$.
Clearly, for $\ell = \ell_2, \dots, L_2$,
\begin{multline*}
    \frac{|\Rr_2(\ell)|}{|\Rr_2(\ell-1)|} 
		\le \frac{\max_{H' \in \Rr_2(\ell-1)} b(H')}{\min_{H \in \Rr_2(\ell)} f(H)}
		\le\frac{d(2\tau d)^k\cdot2((k-1)!)^k}{\ell(\tau M)^{k-1}} \\
		\le \frac{2^{k+1}((k-1)!)^k k^{k-1}\tau d^2}{n^{k-1}\ell} \le \frac12
\end{multline*}
by the assumption $\ell \ge \ell_2 = C_2\tau d^2 / n^{k-1}$ and appropriate choice of constant $C_2$. Now~\eqref{eq:degs} follows from similar computations to~\eqref{eq:pairdegs}.

 This finishes the proof of Claim~\ref{lem:codeg}.
\end{proof}

\section{ Proof of Lemma \ref{lem:Tlives}}\label{sec:lemmaproof}

In this section we prove the crucial  Lemma~\ref{lem:Tlives}.
In view of Claim \ref{clm:degrees} it suffices to show that
\begin{equation}\label{eq:toshow}
\pc{\eta_{t+1}=e}{\R(t) = G} \ge \frac{1 - \epsilon}{\binom n k - t}, \qquad \forall\;e \in K_n
\setminus G,
\end{equation}
for every $t \le (1-\epsilon)M - 1$ and every admissible $G$ such that
\begin{equation}\label{eq:tautau}
d(\tau - \delta) \le  r_G(v) \le d(\tau + \delta), \qquad v \in [n],
\end{equation}
where
\begin{equation*}
\tau = 1 - t/M \quad \mbox{ and } \quad \delta = \sqrt{a \tau (\log n) /d }.
\end{equation*}
In some cases the following simpler bounds (implied by the second inequality in \eqref{eq:deguppergeneral}) on $r_G(v)$ will suffice:
\begin{equation}\label{eq:degGboth}
\tau d / 2 +1 \le  r_G(v) \le   2\tau d, \qquad v \in [n].
\end{equation}

 Since the average of $\pc{\eta_{t+1} = e}{\R(t) = G}$ over $e \in K_n \setminus G$ is exactly $1/\left(\binom n k - t\right)$, there is $f \in K_n \setminus G$ such that
\begin{equation}\label{eq:avg}
\pc{\eta_{t+1} = f}{\R(t) = G} \ge \frac{1}{\binom n k - t}.
\end{equation}
Fix any such $f$ and let $e \in K_n \setminus G$ be arbitrary. We write $G \cup f$ for an ordered graph obtained by appending edge $f$ at the end of $G$. Setting $\Rr_f :=\Rr_{G \cup f}(n,d)$ and $\Rr_e := \Rr_{G \cup e}(n,d)$, we have
\begin{equation}\label{eq:ratio}
\frac{\pc{\eta_{t+1} = e}{\R(t) = G}}{\pc{\eta_{t+1} = f}{\R(t) = G}} = \frac{| \Rr_{G \cup
e}(n,d)|}{|\Rr_{G \cup f}(n,d)|} = \frac{| \Rr_e|}{| \Rr_f|}.
\end{equation}

To bound this ratio, we need to appeal to the configuration model for hypergraphs. Let $\M_G(n,d)$ be a random \emph{multi-$k$-graph extension} of
$G$ to an ordered $d$-regular multi-$k$-graph. Namely, $\M_G(n,d)$ is a sequence of $M$ edges (each of which is a $k$-element multiset of vertices), the
first $t$ of which comprise $G$, while the remaining ones are generated by taking a random uniform
permutation $\Pi$ of the multiset 
$$\{1,\dots,1,\dots,n,\dots,n\}$$ 
with multiplicities $r_G(v)$, $v
\in [n]$, and splitting it into consecutive $k$-tuples.

The number $N_G$ of such permutations is a multinomial coefficient:
\begin{equation*}
N_G := \binom{k(M-t)}{r_G(1), \dots, r_G(n)} = \frac{\left( k(M - t) \right)!}{ \prod_{v \in [n]} r_G(v)!}.
\end{equation*}
A \emph{loop} is an edge with at least one repeated vertex. We say that an extension is \emph{simple}, if all it's edges are distinct and not loops.

Since each simple extension of $G$ is given by the same number of permutations (namely $(k!)^{M - t}$), $\M_G(n,d)$ is uniform
over $\Rr_G(n,d)$. That is, $\M_G(n,d)$, conditioned on simplicity, has the same distribution as
$\R_G(n,d)$.

Set
\begin{equation*}
\M_e = \M_{G \cup e}(n,d) \quad \text{and} \quad \M_f = \M_{G \cup f}(n,d),
\end{equation*}
 for convenience. Noting that $G \cup f$ has $t + 1$ edges, we have
\begin{equation*}
\prob{\M_f \in \Rr_f} = \frac{|\Rr_f|(k!)^{M-t-1}}{N_{G \cup f}} =  \frac{| \Rr_f|(k!)^{M- t-1}\prod_{v \in [n]}r_{G \cup f}(v)!}{(k(M- t-1))!},
\end{equation*}
and similarly for $\M_e$ and $\Rr_e$. This yields, after a few cancelations, that
\begin{equation}\label{eq:product}
\frac{|\Rr_e|}{|\Rr_f|} = \frac{\prod_{v \in e \setminus f} r_G(v)  }{\prod_{v \in f \setminus
e} r_G(v) } \cdot \frac{\prob{\M_e \in \Rr_e}}{\prob{\M_f \in \Rr_f}} .
\end{equation}
The ratio of the products in \eqref{eq:product} is, by (\ref{eq:tautau}), at least
\begin{equation*}
 \left( \frac{\tau - \delta}{\tau + \delta} \right)^k
 \ge \left( 1 - \frac{2\delta}{\tau} \right)^k
\ge  1 - 2k \sqrt{\frac{a \log n}{\tau d}} \ge  1 - 2k \sqrt{\frac{a \log n}{\epsilon d}} \ge
1-\epsilon/2,
\end{equation*}
where the last inequality holds by \eqref{eq:logd} with $\alpha = 1/3$ and $C'\ge
\sqrt[3]{16ak^2}$. On the other hand, the ratio of probabilities in \eqref{eq:product} will be shown in
Claim \ref{lem:mutualsimplicity} below to be at least $ 1 - \epsilon/2$. Consequently, the entire ratio in \eqref{eq:product}, and thus in  
\eqref{eq:ratio}, will be at least $1 - \epsilon$, which, in view of \eqref{eq:avg}, will imply \eqref{eq:toshow} and yield the lemma.

Hence, to complete the proof of Lemma \ref{lem:Tlives} it remains to  show that the probabilities of
simplicity $\prob{\M_e \in \Rr_e}$ are asymptotically the same for all $e \in K_n \setminus G$. Recall that for
every  edge $e \in K_n \setminus G$ we write
\begin{equation}\label{eq:Gprime}
\M_e = \M_{G \cup e}(n,d) \quad \text{and} \quad 
\Rr_e = \Rr_{G \cup e}(n,d).
\end{equation}

\begin{claim}
\label{lem:mutualsimplicity} If $G$, $e$, and $f$ are as above, then, for every $e \in K_n
\setminus G$,
\begin{equation*}
\frac{ \prob{\M_{e} \in \Rr_{e}} } { \prob{\M_{f} \in \Rr_{f}} } \ge 1 - \epsilon/2.
\end{equation*}
\end{claim}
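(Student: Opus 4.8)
The plan is to prove the inequality by building an explicit coupling of the two configuration models $\M_e$ and $\M_f$ via a switching, and then pushing the estimates of Claim~\ref{lem:codeg} through it. Recall that $\prob{\M_e\in\Rr_e}$ and $\prob{\M_f\in\Rr_f}$ are precisely the probabilities that the configuration extensions of $G\cup e$ and $G\cup f$ are simple, and that the two residual degree sequences differ in a very controlled way: for each $a\in e\setminus f$ we have $r_{G\cup e}(a)=r_{G\cup f}(a)-1$, for each $b\in f\setminus e$ we have $r_{G\cup e}(b)=r_{G\cup f}(b)+1$, and they agree elsewhere. Fixing an arbitrary bijection $a\mapsto b$ between $e\setminus f$ and $f\setminus e$ (both of size $k-|e\cap f|$), passing from $\M_f$ to $\M_e$ amounts to moving one stub from each $a$ to its partner $b$.

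This suggests the following coupling. Generate $C_f\sim\M_f$ and, for each pair $(a,b)$, pick a uniformly random stub at $a$ and relabel it to $b$; call the resulting configuration $C_e$. A short multinomial bookkeeping (each $\M_e$-configuration is produced by the same number $\prod_{b\in f\setminus e} r_{G\cup e}(b)$ of (configuration, stub) choices, which matches the ratio $N_{G\cup f}/N_{G\cup e}$) shows that $C_e$ has exactly the law of $\M_e$. Since $f$ is admissible, $\Rr_f\neq\emptyset$ and $\prob{C_f\text{ simple}}>0$; moreover, conditioning on $\{C_f\text{ simple}\}$ makes $C_f$ a uniform simple extension of $G\cup f$, that is $\R_{G\cup f}$, to which Claim~\ref{lem:codeg} applies (its hypothesis \eqref{eq:degG} holds for $G\cup f$ by \eqref{eq:degGboth}, as $r_{G\cup f}(v)\le r_G(v)\le 2\tau d$), and it makes each chosen stub a uniformly random new edge through $a$.

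Because $\prob{\M_e\in\Rr_e}=\prob{C_e\text{ simple}}\ge\prob{C_f\text{ simple}}-\prob{C_f\text{ simple},\,C_e\text{ not simple}}$, it suffices to show $\prob{C_e\text{ not simple}\mid C_f\text{ simple}}\le\epsilon/2$. Writing $E$ for the random new edge through $a$ that is relabeled and $E'=(E\setminus\{a\})\cup\{b\}$, the single swap $(a,b)$ can spoil simplicity only by (i) creating a loop, which needs $b\in E$; (ii) making $E'$ coincide with another new edge of $\R_{G\cup f}$; or (iii) making $E'$ coincide with an old edge of $G\cup e$. I will bound each conditional probability and sum over the at most $k$ swaps. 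For (i), the number of bad choices of $E$ is $\deg_{\R_{G\cup f}\setminus(G\cup f)}(a,b)$, so dividing by $r_{G\cup f}(a)\ge\tau d/2$ and averaging over $\R_{G\cup f}$ with \eqref{eq:pairdegs} gives $O(1/n)+O(1/(\tau d))$. For (ii), the bad choices of $E$ are counted by $\cod_{\R_{G\cup f}|G\cup f}(a,b)$, and \eqref{eq:degs} yields $O(d/n^{k-1})+O(1/(\tau d))$. For (iii), note that $E'$ can never equal $e$ (since $b\in E'$ but $b\notin e$), so only collisions with edges of $G$ matter; here the crude codegree bound fails, and instead I first sum over the at most $d$ edges $g\in G$ through $b$ and apply the single-edge estimate \eqref{eq:noedge} to $\prob{(g\setminus\{b\})\cup\{a\}\in\R_{G\cup f}\setminus(G\cup f)}$, obtaining $O(d/n^{k-1})$ after dividing by $r_{G\cup f}(a)$. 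Collisions between two distinct relabeled edges form a lower-order event handled analogously.

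Summing these contributions over the $\le k$ swaps and invoking $1/n\le\epsilon/C'$ from \eqref{eq:n}, $d/n^{k-1}\le\epsilon/C'$ from \eqref{eq:dn} with $\alpha=1$, and $1/(\tau d)\le 1/(\epsilon d)=o(\epsilon)$ (since $\epsilon^2 d\ge C'^2(\log n)^{2/3}d^{1/3}\to\infty$ by \eqref{eq:logd} with $\alpha=1/3$), the total is at most $\epsilon/2$ once $C'$ is large enough to absorb $k$ and the constants $C_0,C_1,C_2$. This gives $\prob{\M_e\in\Rr_e}\ge(1-\epsilon/2)\prob{\M_f\in\Rr_f}$, as required. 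The main obstacle is step (iii): unlike loop and multi-edge defects, a collision with the fixed dense skeleton $G$ cannot be controlled by a codegree of $\R_{G\cup f}$, and one must instead exploit that a single prescribed relabeled edge falls into the random part with probability only $O(\tau d/n^{k-1})$ via \eqref{eq:noedge}. A secondary technical point is verifying that the simultaneous relabeling really reproduces the law of $\M_e$, including the bookkeeping when $e$ and $f$ share vertices or when two relabeled stubs happen to lie in a common edge.
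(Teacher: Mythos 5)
Your proposal is correct and follows essentially the same route as the paper: the same stub-relabelling coupling of $\M_f$ and $\M_e$ in the configuration model (interpreted as a switching on the simple extension conditioned on $\M_f\in\Rr_f$), the same reduction to bounding the conditional probability that the relabelling creates a loop or a repeated edge, and the same input from Claim~\ref{lem:codeg}. The only deviations are cosmetic: you average the tail bounds \eqref{eq:pairdegs} and \eqref{eq:degs} directly instead of truncating via the paper's ``nice'' event; you route collisions with edges of $G$ through \eqref{eq:noedge} summed over $\deg_G(b)\le d$ edges, whereas the paper's asymmetric codegree $\cod_{H|G\cup f}(u_i,v_i)$ was defined precisely to absorb that case (so that step does not in fact ``fail''); and you only sketch the two interaction events (two chosen edges coinciding, and two relabelled edges colliding with each other --- the paper's $\Ec_1$ and $\Ec_4$), which are bounded by the same $O(1/(\epsilon d))$ computation you already carry out.
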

\begin{proof}
We start by constructing a coupling of $\M_e$ and $\M_f$
 in which they differ in at most $k + 1$ edges (counting in the replacement of $f$ by
  $e$ at the $(t+1)$-th position).

Let $f = \{u_1,\dots, u_k\}$ and $e = \{v_1,\dots, v_k\}$. Further, let $s = k-|f \cap e|$ and suppose
without loss of generality that $\{u_1,\dots, u_s\}\cap\{v_1, \dots, v_s\}=\emptyset$. Let $\Pi_f$
be a random permutation underlying the multi-$k$-graph $\M_f$. Note that $\Pi_f$ differs from any permutation $\Pi_e$ underlying $\M_e$ by having the multiplicities of $v_1,\dots,v_s$ greater by one, and the multiplicities of $u_1,\dots,u_s$ smaller by one than the corresponding multiplicities in $\Pi_e$.

Let $\Pi^*$ be obtained from $\Pi_f$ by  
replacing, for each $i = 1, \dots, s$, a  copy of $v_i$ selected uniformly at random by $u_i$. Define $\M^*$ by
chopping $\Pi^*$ into consecutive $k$-tuples and appending them to $G \cup e$ (see Figure \ref{fig:coupling_seqs}).

\begin{figure}
  \begin{center}
    \begin{tikzpicture}[scale = 1.2,decoration=brace]
      \tikzstyle{vertex}=[draw,circle,fill=white,fill opacity=1,minimum size=4pt, inner sep=3pt]
      \draw (-1,0) node {$\M_f$};
      \foreach \x/\a/\b/\c in {1/*/*/*, 2/*/*/*, 3/*/*/*, 4/*/*/*}
      {
        \draw (\x-0.25,0) node (\x) {$\a$} ;
				\draw (\x,0) node (\x) {$\b$} ;
				\draw (\x+0.25,0) node (\x) {$\c$} ;
      }
			\draw [decorate, very thick, color=black] (0.7,0.3) to node[auto] {$G$} (4.3,0.3);
			
			\draw (5,0) node (5) {$u_1u_2u_3$};
			\draw [decorate, very thick] (4.5,0.3) to node[auto] {$f$} (5.5,0.3);
			
			\foreach \x/\a/\b/\c in {6/*/*/v_2, 7/*/v_1/*, 8/*/*/*, 9/v_3/*/*}
      {
        \draw (\x-0.25,0) node (\x) {$\a$} ;
				\draw (\x,0) node (\x) {$\b$} ;
				\draw (\x+0.25,0) node (\x) {$\c$} ;
			}

			\begin{scope}[shift ={(0,-2)}]
			\draw (-1,0) node {$\M_e$};
      \foreach \x/\a/\b/\c in {1/*/*/*, 2/*/*/*, 3/*/*/*, 4/*/*/*}
      {
        \draw (\x-0.25,0) node (\x) {$\a$} ;
				\draw (\x,0) node (\x) {$\b$} ;
				\draw (\x+0.25,0) node (\x) {$\c$} ;
      }
			\draw [decorate, very thick] (4.3,-0.3) to node[auto] {$G$} (0.7,-0.3);
			
			\draw (5,0) node (5) {$v_1v_2v_3$};
			\draw [decorate, very thick] (5.5,-0.3) to node[auto] {$e$} (4.5,-0.3);
			
			\foreach \x/\a/\b/\c in {6/*/*/u_2, 7/*/u_1/*, 8/*/*/*, 9/u_3/*/*}
      {
        \draw (\x-0.25,0) node (\x) {$\a$} ;
				\draw (\x,0) node (\x) {$\b$} ;
				\draw (\x+0.25,0) node (\x) {$\c$} ;
			}
			\foreach \x in {-0.3,0,0.3}
			{
			  \draw[->, very thick] (5+ \x, 1.7) -- (5 + \x, 0.3);
			}
			\foreach \y in {6.25,7,8.75}
			{
			  \draw[->, very thick] (\y, 1.7) -- (\y, 0.3);
			}
			\end{scope}
      \end{tikzpicture}
    \end{center}
  \caption{Obtaining $\M_e$ from $\M_f$ for $k = s = 3$ by altering the underlying permutation.}
  \label{fig:coupling_seqs}
\end{figure}

It is easy to see that $\Pi^*$ is uniform over all permutations of the multiset 
$$\left\{ 1,
\dots, 1, \dots, n, \dots, n \right\}$$
 with multiplicities $r_{G \cup e}(v), v \in [n]$. This means that
 $\M^*$ has the same distribution as $\M_e$ and thus we will further identify $\M^*$ and
$\M_e$.

Observe that if we condition $\M_f$ on being a simple $k$-graph $H$, then $\M_e$ can be equivalently obtained by the following switching: (i) replace edge $f$ by $e$; (ii) for each $i = 1, \dots, s$, choose, uniformly at random, an edge $e_i \in H \setminus (G \cup f)$ incident to $v_i$ and replace it by $(e_i \setminus \{v_i\}) \cup \{u_i\}$ (see Figure~\ref{fig:coupling}). Of course, some of $e_i$'s may coincide. For example, if $e_{i_1} = \dots = e_{i_l}$, then the effect of the switching is that $e_{i_1}$ is replaced by
$(e_{i_1} \setminus \{v_{i_1}, \dots, v_{i_l} \}) \cup \{u_{i_1}, \dots, u_{i_l} \}$.

\begin{figure}
\begin{center}
\begin{tikzpicture}[scale = 0.7]
    \tikzstyle{vertex}=[draw,circle,fill=white,fill opacity=1,minimum size=4pt, inner sep=3pt]

      \draw (3,-3) node (a) {$\M_f$};
      \foreach \x in {1,2,4,5}
      {
        \draw (180-\x*60: 2) node[vertex] (\x) {} ;
      }
        \foreach \x/\y in {1/u_1, 2/u_2}
        {
          \draw node [above] at (\x.north) {$\y$};
      }
        \foreach \x/\y in {5/v_1, 4/v_2}
        {
          \draw node [below] at (\x.south) {$\y$};
      }
        \foreach \x/\y in {1/2}
      {
              \draw [dashed, very thick, auto] (\x) to node {$f$} (\y);
      }
        \foreach \x/\y in {4/3, 5/6}
      {
              \draw  (\x) + (60:1) to (\x);
                            \draw  (\x) + (90:1) to (\x);
                            \draw  (\x) + (120:1) to (\x);
      }

        \foreach \x in {3,6}
      {
        \draw (180-\x*60: 2) node[vertex] (\x) {} ;
      }
    \draw [very thick, auto] (3)  to node {$e_2$} (4);
      \draw [very thick, auto] (5) to node {$e_1$} (6);

\draw (6,0) node () {\huge $ \Rightarrow$};

  \begin{scope}[shift={(12,0)}]
    \draw (3,-3) node (a) {$\M_e$};
      \foreach \x in {1,2, ..., 6}
      {
        \draw (180-\x*60: 2) node[vertex] (\x) {} ;
      }
        \foreach \x/\y in {1/u_1, 2/u_2}
        {
          \draw node [above] at (\x.north) {$\y$};
      }
        \foreach \x/\y in {5/v_1, 4/v_2}
        {
          \draw node [below] at (\x.south) {$\y$};
      }
        \foreach \x/\y in {4/5}
      {
              \draw [dashed, very thick, auto] (\x) to node {$e$} (\y);
      }
        \foreach \x/\y in {1/6, 2/3}
      {
              \draw [very thick] (\x) to (\y);
      }
    \draw  (4) + (120:1) to (4);
        \draw  (4) + (90:1) to (4);
        \draw  (5) + (90:1) to (5);
        \draw  (5) + (60:1) to (5);
  \end{scope}
  \end{tikzpicture}
  \end{center}
  \caption{Obtaining $\M_e$ from $\M_f$ for $k = s=2$: only relevant edges are displayed; the ones belonging to $\M_f \setminus (G \cup f)$ are shown as solid lines.}
  \label{fig:coupling}
\end{figure}

  The crucial idea is that such a switching is unlikely to create loops or multiple edges. However,  for certain $H$ this might not true. For example, if $e \in H \setminus (G \cup f)$, then the random choice of $e_i$'s in step (ii) is unlikely to destroy $e$, but in step (i) edge $f$ has been deterministically replaced by an additional copy of $e$, thus creating a double edge. Moreover, if almost every $(k-1)$-tuple of vertices extending $v_i$ to an edge in $H \setminus (G \cup f)$ also extends $u_i$ to an edge in $H$, then most likely the replacement of $v_i$ by $u_i$ will create a double edge, too.
To avoid such and other bad instances, we say that $H \in \Rr_f$ is \emph{nice} if the following three properties hold:	
\begin{equation}\label{eq:nice1}
e \notin H
\end{equation}
\begin{equation}\label{eq:nice3}
\max_{i = 1, \dots, s} \deg_{H \setminus (G \cup f)}(u_i,v_i) \le \ell_1 + k \log_2 n,
\end{equation}
\begin{equation}\label{eq:nice2}
\max_{i = 1, \dots, s} \cod_{H|G \cup f}(u_i,v_i) \le \ell_2 + k \log_2 n,
\end{equation}
where $\ell_1 = C_1\tau d/n$ and $\ell_2 = C_2\tau d^2 / n^{k-1}$ are as in Claim \ref{lem:codeg}.
Note that $\M_f$, conditioned on $\M_f \in \Rr_f$, is distributed uniformly over $\Rr_{G\cup f}(n,d)$. Since we chose $f$ such that by \eqref{eq:avg} is satisfied, we have that $k$-graph $G \cup f$ is admissible. Therefore by Claim \ref{lem:codeg} we have
\begin{align}
\notag \pc{\M_f \text{ is not nice}}{\M_f \in \Rr_f} &\le \frac{C_0 \tau d}{n^{k-1}} + 2 \cdot s 2^{- k
\log_2 n } \\
 \label{eq:codegcond} &\le \frac{C_0d + 2k}{n^{k-1}} \le \frac{\epsilon}{4},
\end{align}
where the last inequality follows by \eqref{eq:dn} with $\alpha = 1$ and sufficiently large constant $C'$. We have
\begin{align}
\notag \frac{ \prob{\M_e \in \Rr_e} } { \prob{\M_f \in \Rr_f} } &\ge \pc{\M_e \in \Rr_e}{\M_f \in \Rr_f} \\
\label{eq:decomposition} &\ge \pc{\M_e \in \Rr_e}{\M_f \text{ is nice}}\pc{\M_f \text{ is nice}}{\M_f \in \Rr_f}.
\end{align}
It suffices to show that
\begin{equation}\label{eq:MMnice}
\pc{\M_e \in \Rr_e}{\M_f \text{ is nice}} \ge 1 - \epsilon/4,
\end{equation}
since in view of \eqref{eq:codegcond} and \eqref{eq:MMnice}, inequality  \eqref{eq:decomposition}
completes the proof of Claim~\ref{lem:mutualsimplicity}.

Now we prove \eqref{eq:MMnice}. Fix a nice $k$-graph $H \in \Rr_f$ and condition on the event $\M_f = H$. The event that
$\M_e$ is not simple is contained in the union of the following four events:
\begin{itemize}
\item[] $\Ec_1 = \{$ two of the randomly chosen edges $e_1, \dots, e_s$ coincide $\}$,
\item[] $\Ec_2 = \{$ $(e_i \setminus v_i) \cup u_i$ is a loop for some $i = 1, \dots, s$ $\}$,
\item[] $\Ec_3 = \{$ $(e_i \setminus v_i) \cup u_i \in H$ for some $i = 1, \dots, s$ $\}$,
\item[] $\Ec_4 = \{$ $(e_i \setminus v_i) \cup u_i = (e_j \setminus v_j) \cup u_j$ for some distinct $i$ and $j$  $\}$.
\end{itemize}
Event $\Ec_1$ covers all cases when a double edge is created by replacing several vertices in the same edge. Creation of multiple edges in other ways is addressed by events $\Ec_3$ and $\Ec_4$.

In what follows we will several times use the fact that
\begin{equation}\label{eq:mindeg}
\deg_{H\setminus (G \cup f)} (v) \ge \tau d/2  \ge \epsilon d/2, \qquad \forall v \in [n],
\end{equation}
which is immediate from \eqref{eq:degGboth} and $\tau\ge \epsilon$. To bound the probability of
$\Ec_1$, observe that, given $1 \le i < j \le s$, the number of choices of a coinciding pair $e_i
= e_j$ is $\deg_{H \setminus (G\cup f)}(v_i,v_j) \le \deg_{H \setminus (G\cup f)}(v_i)$ and the probability that both $v_i$ and $v_j$ actually select a fixed common edge is $(\deg_{H \setminus (G \cup f)}(v_i)\deg_{H \setminus (G \cup f)}(v_j))^{-1}$. Therefore using \eqref{eq:mindeg} we obtain
\begin{multline}\label{eq:Ec1}
\prob{\Ec_1 | \M_f = H} \le \sum_{1 \le i < j \le s} \frac{\deg_{H \setminus (G \cup f)}(v_i,v_j)}{\deg_{H \setminus (G \cup f)}(v_i)\deg_{H \setminus (G \cup f)}(v_j)} \\
\le \sum_{1 \le i < j \le s} \frac{1}{\deg_{H \setminus (G \cup f)}(v_j)} \le \frac{2\binom k 2}{\epsilon d} \le \frac{\epsilon}{16},
\end{multline}
where the last inequality follows from \eqref{eq:logd} with $\alpha = 1/2$ and sufficiently large~$C'$.

To bound the probability of $\Ec_2$, note that a loop in $\M_e$ can only be created  when for
some $i = 1, \dots, s$, the randomly chosen edge $e_i$ contains both $v_i$ and $u_i$. There are
at most $\deg_{H \setminus (G \cup f)}(u_i, v_i)$ such edges. Therefore, by \eqref{eq:nice3} and
\eqref{eq:mindeg} we get
\begin{multline}\label{eq:loops}
\pc{\Ec_2}{\M_f = H} \le \sum_{i = 1}^s \frac{\deg_{H \setminus (G \cup f)}(u_i,v_i)}{\deg_{H\setminus (G \cup f)}(v_i)}
\le \frac{2k(\ell_1 + k \log_2 n)}{\tau d} \\
\le \frac{2k \ell_1}{\tau d} + \frac{2k^2 \log_2 n}{\epsilon d} = \frac{2k C_1}{n} + \frac{2k^2 \log_2
n}{\epsilon d} \le \frac{\epsilon}{16},
\end{multline}
where the last inequality is implied by \eqref{eq:logd} with $\alpha=1/2$, \eqref{eq:n} and sufficiently large~$C'$.

\medskip

Similarly we bound the probability of $\Ec_3$, the event that for some $i$ we will choose $e_i \in H\setminus (G 
\cup f)$ with
 $(e_i \setminus v_i) \cup u_i \in H$. There are $\cod_{H|G\cup
f}(u_i,v_i)$ such edges. Thus, by \eqref{eq:nice2} and  \eqref{eq:mindeg} we obtain
\begin{multline}\label{eq:multiples}
\pc{\Ec_3}{\M_f = H} \le \sum_{i = 1}^s \frac{\cod_{H|G\cup f}(u_i,v_i)}{\deg_{H \setminus (G \cup f)}(v_i)}
\le \frac{2k(\ell_2 + k \log_2 n)}{\tau d} \\
\le \frac{2k\ell_2}{\tau d} + \frac{2k^2 \log_2 n}{\tau d} \le  \frac{2kC_2
d}{n^{k-1}} + \frac{2k^2 \log_2 n}{\epsilon d} \le \frac{\epsilon}{16},
\end{multline}
where the last inequality follows from \eqref{eq:logd} with $\alpha = 1/2$, \eqref{eq:dn} with $\alpha=1$ and sufficiently large $C'$.

Finally, note that, given $1 \le i < j \le s$, if a pair $e_i, e_j \in H \setminus (G
\cup f)$ satisfies the condition in $\Ec_4$, then $u_j \in e_i \setminus v_i$ and $e_j = (e_i \setminus \{v_i, u_j\}) \cup \{v_j, u_i\}$. This means that $e_j$ is uniquely determined by $e_i, u_i, u_j, v_i,$ and $v_j$. Therefore the number of such pairs $e_i, e_j$ is at most $\deg_{H \setminus (G \cup f)}(v_i, u_j) \le \deg_{H \setminus (G \cup f)}(v_i)$ and
we get exactly the same bound as in \eqref{eq:Ec1}:
\begin{multline}\label{eq:twomultiples}
\pc{\Ec_4}{\M_f = H} 
\le \sum_{1 \le i < j \le s} \frac{\deg_{H \setminus (G \cup f)}(v_i, u_j)}{\deg_{H \setminus (G \cup f)}(v_i)\deg_{H \setminus (G \cup f)}(v_j)} \\
\le \sum_{1 \le i < j \le s} \frac{1}{\deg_{H \setminus (G \cup f)}(v_j)}
\le \frac{\epsilon}{16}.
\end{multline}
Combining \eqref{eq:Ec1}-\eqref{eq:twomultiples} and averaging over nice $H$, we obtain \eqref{eq:MMnice}, as required.
\end{proof}

\section{Concluding Remarks}\label{sec:concluding}

Theorem~\ref{thm:embed} remains valid if we replace the random hypergraph $\Gk(n,m)$ by $\Gk(n,p)$ with
$p = (1-2\gamma)d/\binom{n-1}{k-1}$, say. To see this one can modify the proof of Theorem
\ref{thm:embed} as follows. Let $B_n \sim \Bi(\binom n k, p)$ be a random variable independent of
the process $(\G(t))_{t}$. If $B_n \le m \le |S|$, sample $\Gk(n,p)$ by taking the first $B_n$
edges of $S$ (which are uniformly distributed over all $k$-graphs with $B_n$ edges). Otherwise
sample $\Gk(n,p)$ among $k$-graphs with $B_n$ edges independently. In view of the assumption~\eqref{eq:gamma},  Chernoff's inequality (see \cite[(2.5)]{JLR}) and \eqref{eq:Sm} imply
\[
\prob{\Gk(n,p) \not\subset \Rk(n,d)} \le \prob{B_n > m} + \prob{|S| < m} \to 0, \quad \text{as} \quad n \to \infty.
\]

  The lower bound on $d$ in Theorem~\ref{thm:embed}
  is necessary because the second moment method applied to $\Gk(n,p)$ (cf. Theorem 3.1(ii)
   in \cite{B01}) and asymptotic equivalence of $\Gk(n,p)$ and $\Gk(n,m)$ yields that
    for $d = o(\log n)$ and $m \sim cM$ there is a sequence $\Delta = \Delta(n)\gg d$
    such that the maximum degree $\Gk(n,m)$ is at least $\Delta$ a.a.s.\
 
  In view of the above, 
  our approach cannot be extended to $d = O(\log n)$ in part~\eqref{thm:hamil:1}
  of Theorem~\ref{thm:hamil}. Nevertheless, we believe (as it was already stated in~\cite{DFRS})
   that for loose Hamilton cycles it suffices to assume that $d=\Omega(1)$.

\begin{conjecture}
For every $k \ge 3$ there is a constant $d_k$ such that if $d \ge d_k$, then a.a.s. $\Rk(n,d)$
contains a loose Hamilton cycle.
\end{conjecture}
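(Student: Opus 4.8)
The plan is to abandon the embedding strategy of Theorem~\ref{thm:embed} entirely: as the preceding remarks show, once $d = O(\log n)$ the maximum degree of $\Gk(n,m)$ with $m\sim cM$ already exceeds $d$, so no subhypergraph of $\Rk(n,d)$ of the required density can be distributed like $\Gk(n,m)$. For constant $d$ one must therefore analyse $\Rk(n,d)$ directly. Following the template that Robinson and Wormald \cite{RW94} used to settle the case $k=2$, I would attack the conjecture by the small subgraph conditioning method, carried out in the $d$-regular configuration model $\M^{(k)}(n,d)$ (the model of Section~\ref{sec:lemmaproof} with $G$ empty), and then transfer the conclusion to the uniform model using the fact that, for fixed $k$ and $d$, the configuration model is simple with probability bounded away from zero.

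First I would run the first moment. Let $X$ denote the number of loose Hamilton cycles in $\M^{(k)}(n,d)$. Such a cycle has $n/(k-1)$ edges, with $n/(k-1)$ junction vertices of degree~$2$ and $n(k-2)/(k-1)$ internal vertices of degree~$1$ along the cycle. Counting the cyclic arrangements of the vertices into a loose cycle and multiplying by the probability that the prescribed stubs are paired correctly in the configuration model gives a closed form for $\E X$. The goal of this step is to show that $\E X\to\infty$ and to determine the exponential growth rate $\Lambda$, since it is exactly this rate that the second moment must reproduce.

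The core of the argument is the second moment together with the identification of the short-subgraph variables. Let $Y_1,Y_2,\dots$ record the numbers of small configurations in $\M^{(k)}(n,d)$ whose fluctuations drive the variance of $X$ --- short loose cycles and the configurations counting loops and repeated edges; in the configuration model these converge jointly to independent Poisson variables with computable means $\lambda_j$. One then evaluates $\E X^2$ by summing over all overlap patterns of an ordered pair of loose Hamilton cycles, with the aim of verifying the product formula
\begin{equation*}
\frac{\E X^2}{(\E X)^2} \longrightarrow \prod_{j} \exp\left(\lambda_j \delta_j^2\right),
\end{equation*}
where $\delta_j$ measures the correlation between $X$ and $Y_j$. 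Once this is established, the small subgraph conditioning theorem yields $X>0$ a.a.s.; moreover, because simplicity of $\M^{(k)}(n,d)$ is itself a function of the variables $Y_j$ recording loops and double edges, the conclusion survives conditioning on simplicity, and hence $\Rk(n,d)$ contains a loose Hamilton cycle a.a.s.\ for all $d\ge d_k$.

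The hard part will be precisely this second moment estimate. For $k\ge 3$ two loose Hamilton cycles can overlap in far richer ways than two Hamilton cycles of a graph --- shared edges, shared junction vertices, and partial edge-overlaps all contribute --- and one must show that the sum over overlap patterns is dominated by the ``almost independent'' pairs, with the entire correction captured by the finite list of short-subgraph variables. Showing that $\E X^2/(\E X)^2$ converges to \emph{exactly} the product above --- neither strictly larger, which would defeat the method, nor demanding variables beyond the $Y_j$ --- is the delicate combinatorial heart of the matter, and is the reason the statement is posed only as a conjecture. A secondary technical point is checking the regularity hypotheses of the method uniformly as $d\to\infty$ with $n$, so that a single threshold $d_k$ works for all large $d$.
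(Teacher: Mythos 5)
This statement is posed in the paper as a \emph{conjecture}; the paper contains no proof of it, so there is nothing to compare your argument against except the conjecture's plausibility. Judged on its own terms, what you have written is a research programme, not a proof, and the gap is exactly where you locate it yourself: the second moment computation. The small subgraph conditioning method stands or falls on verifying that $\E X^2/(\E X)^2$ converges to precisely $\prod_j \exp(\lambda_j\delta_j^2)$, and for loose Hamilton cycles in $k$-graphs with $k\ge 3$ the sum over overlap patterns of two cycles (shared edges, shared junction vertices, partial edge intersections where two cycle-edges share between $1$ and $k-1$ vertices) has no analogue in the Robinson--Wormald analysis for $k=2$ and has not been carried out anywhere. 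Asserting that the sum ``is dominated by the almost independent pairs'' is a restatement of the conjecture in the language of the method, not progress toward it. Until that estimate is actually performed, the proposal establishes nothing; this is presumably why the authors leave the statement as a conjecture even though the strategy you describe is the obvious candidate.

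Two further points deserve attention. First, the conjecture asserts the conclusion for all $d\ge d_k$, and Theorem~\ref{thm:hamil}\eqref{thm:hamil:1} already covers $d\gg\log n$ (indeed $C\log n\le d$ for $k=3$), so the genuinely open range is $d_k\le d=O(\log n)$; your closing remark about uniformity ``as $d\to\infty$ with $n$'' points at the wrong regime, and in any case your transfer step via ``the configuration model is simple with probability bounded away from zero'' is only valid for \emph{fixed} $d$ (for $d\to\infty$ the simplicity probability tends to $0$, so one would need either a contiguity argument or a separate treatment of growing $d$). Second, your first-moment bookkeeping (that a loose Hamilton cycle has $n/(k-1)$ edges, $n/(k-1)$ junction vertices of degree $2$ and $n(k-2)/(k-1)$ vertices of degree $1$) is correct, but you should verify that $\E X\to\infty$ already for $d=d_k$ with $d_k$ as small as the conjecture implicitly demands; this determines what $d_k$ can be and is a prerequisite for the method, not merely a warm-up.
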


We also believe that the lower bounds on $d$ in parts \eqref{thm:hamil:2} and \eqref{thm:hamil:3}
of Theorem~\ref{thm:hamil} are of optimal order.

\begin{conjecture}
For all integers $k>\ell \ge 2$ if $d \ll n^{\ell-1}$, then a.a.s. $\Rk(n,d)$ is \emph{not}
$\ell$-Hamiltonian.
\end{conjecture}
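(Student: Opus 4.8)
The plan is to prove the conjecture by the first moment method, showing that the expected number of $\ell$-Hamilton cycles in $\Rk(n,d)$ tends to $0$ as soon as $d\ll n^{\ell-1}$. Since non-$\ell$-Hamiltonicity is a non-existence statement, no second moment is needed: Markov's inequality finishes the argument once the expectation is shown to vanish. Let $N=n/(k-\ell)$ be the number of edges of an $\ell$-Hamilton cycle, and let $Z$ count the $\ell$-Hamilton cycles contained in $\Rk(n,d)$, so that $\E Z=\sum_C\prob{C\subseteq\Rk(n,d)}$, the sum ranging over all $N$-edge $\ell$-cycles $C$ on $[n]$. An $\ell$-Hamilton cycle is determined by a cyclic ordering of $[n]$, hence the number of such $C$ is at most $n!$ (smaller by an $e^{\Theta(n)}$ automorphism factor that will be immaterial below).

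The key step is a uniform upper bound on $\prob{C\subseteq\Rk(n,d)}$ for a fixed $C$ with edge set $\{e_1,\dots,e_N\}$. Writing $G_j=\{e_1,\dots,e_j\}$ and telescoping,
\[
\prob{C\subseteq\Rk(n,d)}=\prod_{j=0}^{N-1}\pc{e_{j+1}\in\Rk(n,d)}{G_j\subseteq\Rk(n,d)}.
\]
If some prefix $G_j$ fails to be admissible the product is $0$; otherwise the conditional probability equals $\prob{e_{j+1}\in\R_{G_j}}$, exactly the quantity bounded in Claim~\ref{lem:codeg}. Because $C$ has maximum degree $O(1)$ we have $r_{G_j}(v)\le d<2\tau d$, and since $j\le N\ll M$ each $G_j$ has far fewer than $(1-\epsilon)M$ edges; thus the hypotheses of Claim~\ref{lem:codeg} hold and \eqref{eq:noedge} gives each factor at most $C_0\tau d/n^{k-1}\le C_0 d/n^{k-1}$. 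Consequently
\[
\prob{C\subseteq\Rk(n,d)}\le\left(\frac{C_0 d}{n^{k-1}}\right)^{N}.
\]

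Combining the two bounds and taking logarithms, the $n\log n$ contributions of $\log n!$ and of $-N(k-1)\log n$ merge with coefficient $1-\tfrac{k-1}{k-\ell}=\tfrac{1-\ell}{k-\ell}$, while all remaining terms (including $N\log C_0$ and the automorphism correction) are $O(n)$:
\[
\log\E Z\le\frac{n}{k-\ell}\bigl((1-\ell)\log n+\log d\bigr)+O(n)=\frac{n}{k-\ell}\log\!\left(\frac{d}{n^{\ell-1}}\right)+O(n).
\]
When $d\ll n^{\ell-1}$ we have $\log(d/n^{\ell-1})\to-\infty$, so the leading term is $-\omega(n)$ and dominates the $O(n)$ error; hence $\E Z\to0$ and $\prob{Z\ge1}\to0$, which is the claim.

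The main obstacle is the per-edge probability bound, that is, controlling the correlations among a \emph{linear}-sized family of edges in the regular model; this is precisely what the switching estimate \eqref{eq:noedge} supplies, and the only quantitative danger is that raising the constant $C_0$ to the power $N=\Theta(n)$ produces an $e^{\Theta(n)}$ factor. This is harmless exactly because the first-moment gap is super-linear: $d\ll n^{\ell-1}$ forces $n^{\ell-1}/d\to\infty$, so the exponent $\tfrac{n}{k-\ell}\log(n^{\ell-1}/d)$ is $\omega(n)$. A secondary, more technical obstacle is the range $d=O(\log n)$, which is allowed when $\ell\ge2$ but lies outside the regime \eqref{eq:epsilon} under which Claim~\ref{lem:codeg} is proved; there one would re-run the edge-removal switching directly for a sparse $G$, where the degree-concentration input of Claim~\ref{clm:degrees} is not needed and the same bound $\prob{e\in\R_G}=O(d/n^{k-1})$ should persist.
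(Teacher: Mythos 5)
This statement is one of the paper's two closing \emph{conjectures}; the authors offer no proof of it, so there is nothing in the paper to compare your argument against. Judged on its own terms, your first-moment skeleton is the natural attack and the arithmetic is right: the union bound over at most $n!$ cycles against a per-edge factor $O(d/n^{k-1})$ raised to the power $N=n/(k-\ell)$ does produce $\log\E Z\le\frac{n}{k-\ell}\log(d/n^{\ell-1})+O(n)$, which is $-\omega(n)$ precisely when $d\ll n^{\ell-1}$, so the $e^{\Theta(n)}$ slack from the constant $C_0$ and from the automorphism count is indeed harmless. The reduction of $\pc{e_{j+1}\in\Rk(n,d)}{G_j\subseteq\Rk(n,d)}$ to $\prob{e_{j+1}\in\R_{G_j}}$ is legitimate (uniformity over ordered extensions projects to uniformity over unordered $d$-regular supergraphs of $G_j$), and \eqref{eq:noedge} is the correct tool: it is exactly a statement that conditioning on a bounded-degree prefix does not inflate the marginal edge probability beyond a constant factor.

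The genuine gap is the low end of the range of $d$, which you flag but underestimate. Claim~\ref{lem:codeg} is proved under the standing assumptions of Lemma~\ref{lem:Tlives}, i.e.\ under \eqref{eq:epsilon}, which is vacuous unless $d\ge C\log n$; yet for every $\ell\ge2$ the conjectured range includes all $d$ down to constants. For $C\log n\le d\ll n^{\ell-1}$ your application is fine (here $N\ll M$, so $\tau_j\to1$ and $r_{G_j}(v)\le d\le2\tau_j d$ throughout, and one checks that the proof of \eqref{eq:noedge} never actually invokes the lower bound on $d$ --- only $d/n^{k-1}$ and $1/n$ small --- so the estimate can be restated without \eqref{eq:epsilon}). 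But for bounded $d$ there is a second obstruction you do not mention: $N=n/(k-\ell)$ is then comparable to $M=nd/k$, so $\tau_j=1-j/M$ is not bounded away from $0$ along the cycle, the hypothesis $r_{G_j}(v)\le2\tau_j d$ fails for large $j$, and the switching count $f(H)\ge(\tau M)^{k-1}/2(k-1)!$ degenerates. One can truncate the telescoping product at $j\le M/2$, but the resulting bound $n!(C_0d/n^{k-1})^{M/2}$ only vanishes when $d(k-1)>2k$, leaving finitely many small values of $d$ (and the borderline case $N=M$, where the cycle would have to be the entire hypergraph) to be treated by a separate argument, e.g.\ directly in the configuration model. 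So the proposal is a convincing proof of the conjecture for $d\ge C\log n$, but as written it does not cover the full conjectured range, and closing the bounded-$d$ case requires additional work rather than the one-sentence remark you give.
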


	\section*{Acknowledgements}
	The authors would like to thank the anonymous referees for careful reading of the manuscript and numerous helpful suggestions.
  \bibliographystyle{abbrv} 
  \bibliography{embedding_hypergraphs}

\begin{thebibliography}{10}

\bibitem{ABKP}
P.~Allen, J.~B{\"o}ttcher, Y.~Kohayakawa, and Y.~Person.
\newblock Tight {H}amilton cycles in random hypergraphs.
\newblock {\em Random Structures Algorithms}, 46(3):446--465, 2015.

\bibitem{B01}
B.~Bollob{\'a}s.
\newblock {\em Random graphs}, volume~73 of {\em Cambridge Studies in Advanced
  Mathematics}.
\newblock Cambridge University Press, Cambridge, second edition, 2001.

\bibitem{CFR}
C.~Cooper, A.~Frieze, and B.~Reed.
\newblock Random regular graphs of non-constant degree: connectivity and
  {H}amiltonicity.
\newblock {\em Combin. Probab. Comput.}, 11(3):249--261, 2002.

\bibitem{DF}
A.~Dudek and A.~Frieze.
\newblock Loose {H}amilton cycles in random uniform hypergraphs.
\newblock {\em Electron. J. Combin.}, 18(1):Paper 48, pp. 14, 2011.

\bibitem{DF2}
A.~Dudek and A.~Frieze.
\newblock Tight {H}amilton cycles in random uniform hypergraphs.
\newblock {\em Random Structures Algorithms}, 42(3):374--385, 2013.

\bibitem{DFLS}
A.~Dudek, A.~Frieze, P.-S. Loh, and S.~Speiss.
\newblock Optimal divisibility conditions for loose {H}amilton cycles in random
  hypergraphs.
\newblock {\em Electron. J. Combin.}, 19(4):Paper 44, pp. 17, 2012.

\bibitem{DFRS}
A.~Dudek, A.~Frieze, A.~Ruci{\'n}ski, and M.~{\v{S}}ileikis.
\newblock Loose {H}amilton cycles in regular hypergraphs.
\newblock {\em Combin. Probab. Comput.}, 24(1):179--194, 2015.

\bibitem{F}
A.~Frieze.
\newblock Loose {H}amilton cycles in random 3-uniform hypergraphs.
\newblock {\em Electron. J. Combin.}, 17(1):Note 28, pp. 4, 2010.

\bibitem{JLR}
S.~Janson, T.~{\L}uczak, and A.~Ruci{\'n}ski.
\newblock {\em Random graphs}.
\newblock Wiley-Interscience Series in Discrete Mathematics and Optimization.
  Wiley-Interscience, New York, 2000.

\bibitem{FK}
M.~Karo{\' n}ski and A.~Frieze.
\newblock {\em Introduction to Random Graphs}.
\newblock Cambridge University Press, 2015.
\newblock Available at \url{http://www.math.cmu.edu/~af1p/Book.html}.

\bibitem{KV04}
J.~H. Kim and V.~H. Vu.
\newblock Sandwiching random graphs: universality between random graph models.
\newblock {\em Adv. Math.}, 188(2):444--469, 2004.

\bibitem{KV06}
J.~H. Kim and V.~H. Vu.
\newblock Generating random regular graphs.
\newblock {\em Combinatorica}, 26(6):683--708, 2006.

\bibitem{KSVW}
M.~Krivelevich, B.~Sudakov, V.~H. Vu, and N.~C. Wormald.
\newblock Random regular graphs of high degree.
\newblock {\em Random Structures Algorithms}, 18(4):346--363, 2001.

\bibitem{M85}
B.~D. McKay.
\newblock Asymptotics for symmetric {$0$}-{$1$} matrices with prescribed row
  sums.
\newblock {\em Ars Combin.}, 19(A):15--25, 1985.

\bibitem{MW91}
B.~D. McKay and N.~C. Wormald.
\newblock Asymptotic enumeration by degree sequence of graphs with degrees
  {$o(n\sp {1/2})$}.
\newblock {\em Combinatorica}, 11(4):369--382, 1991.

\bibitem{RW94}
R.~W. Robinson and N.~C. Wormald.
\newblock Almost all regular graphs are {H}amiltonian.
\newblock {\em Random Structures Algorithms}, 5(2):363--374, 1994.

\bibitem{SW}
A.~Steger and N.~C. Wormald.
\newblock Generating random regular graphs quickly.
\newblock {\em Combin. Probab. Comput.}, 8(4):377--396, 1999.
\newblock Random graphs and combinatorial structures (Oberwolfach, 1997).

\bibitem{W16}
N.~Wormald.
\newblock The degree sequence of a random graph.
\newblock Technical Report 22/2016, Oberwolfach Research Institute for
  Mathematics, 2016.

\end{thebibliography}

\end{document}